\DeclareMathAlphabet{\mathcal}{OMS}{cmsy}{m}{n} 
\newtheorem{theorem}{Theorem}
\newtheorem*{proposition}{Proposition}
\newtheorem{lemma}{Lemma}
\newtheorem{conjecture}{Conjecture}
\newtheorem{question}{Question}
\newcommand{\vertbar}{\>|\>}
\DeclareMathOperator{\dcobound}{d}
\DeclareMathOperator{\Der}{Der}
\DeclareMathOperator{\End}{End}
\DeclareMathOperator{\id}{id}
\DeclareMathOperator{\im}{Im}
\DeclareMathOperator{\Ker}{Ker}
\begin{document}

\title{Special and exceptional mock-Lie algebras}
\author{Pasha Zusmanovich}
\address{
Department of Mathematics, University of Ostrava, Ostrava, Czech Republic
}
\email{pasha.zusmanovich@osu.cz}
\subjclass[2010]{17C10; 17C40; 17C55; 17B35; 17-08} 
\keywords{
Mock-Lie algebra; Jordan algebra; Jacobi identity; nil index $3$; 
faithful representation; Albert}
\date{last revised January 1, 2017}
\thanks{Lin. Algebra Appl., to appear; arXiv:1608.05861}

\begin{abstract}
We observe several facts and make conjectures about commutative algebras
satisfying the Jacobi identity. The central question is which of those algebras
admit a faithful representation (i.e., in Lie parlance, satisfy the Ado theorem,
or, in Jordan parlance, are special).
\end{abstract}

\maketitle

\section*{Introduction}

A while ago, a new class of algebras emerged in the literature -- the so-called
mock-Lie algebras. These are commutative algebras satisfying the Jacobi 
identity. These algebras are locally nilpotent, so there are no nontrivial
simple objects. Nevertheless, they seem to have an interesting structure
theory which gives rise to interesting questions. And, after all, it is always 
curios to play with a classical notion by modifying it here and there and see 
what will happen -- in this case, to replace in Lie algebras anti-commutativity
by commutativity.

In \cite{agore-mil}, it was asked whether a fi\-ni\-te-di\-men\-si\-o\-nal 
mock-Lie algebra admits a fi\-ni\-te-di\-men\-si\-o\-nal faithful 
representation. In fact, an example providing a negative answer to this question
was given a while ago -- hidden in a somewhat obscure place, \cite{hjs}. We
provide an independent computer verification of that and similar examples, and 
examine several constructions and arguments for Lie algebras to see what works 
and what breaks in the mock-Lie case.

\section{Definitions. Preliminary facts and observations}\label{sec-def}

The standing assumption throughout the paper is that the base field $K$ is of 
characteristic $\ne 2,3$. An algebra $L$ over $K$, with multiplication denoted 
by $\circ$, is called \emph{mock-Lie} if it is commutative:
$$
x \circ y = y \circ x ,
$$
and satisfies the Jacobi identity:
$$
(x \circ y) \circ z + (z \circ x) \circ y + (y \circ z) \circ x = 0
$$
for any $x,y,z\in L$.

A substitution $x=y=z$ into the Jacobi identity yields 
$x^{\circ 3} = (x \circ x) \circ x = 0$. Conversely, linearizing the latter 
identity, we get back the Jacobi identity. Moreover, it is easy to see that 
assuming commutativity, the Jacobi identity is equivalent to the Jordan identity
$(x^{\circ 2} \circ y) \circ x = x^{\circ 2} \circ (y \circ x)$ (see, for 
example, \cite[Lemma 2.2]{burde-fial}). (On the other hand, commutative Leibniz
and commutative Zinbiel algebras form a narrower class of mock-Lie algebras, 
namely, commutative algebras of nilpotency index $3$: 
$(x \circ y) \circ z = 0$).

Thus, mock-Lie algebras can be characterized at least in the following four 
equivalent ways:
$$
\frac{\text{commutative}}{\text{Jordan}} \>\text{ algebras }\>
\frac{\text{satisfying the Jacobi identity}}{\text{of nil index } 3} .
$$

This class of algebras appeared in the literature under different names, 
reflecting, perhaps, the fact that it was considered from different viewpoints 
by different communities, sometimes not aware of each other's results. 
Apparently, for the first time these algebras appeared in 
\cite[\S 5]{zhevlakov}, where an example of infinite-dimensional solvable but 
not nilpotent mock-Lie algebra was given (reproduced in 
\cite[\S 4.1, Example 1]{nearly-ass}); further examples can be found in 
\cite[\S 4.1, Example 2 and \S 5.4, Exercise 4]{nearly-ass} and 
\cite[\S 2.5]{walcher}. In this and other Jordan-algebraic literature these 
algebras are called just ``Jordan algebras of nil index $3$''. In 
\cite{okubo-kamiya} they are called ``Lie-Jordan algebras'' (superalgebras are 
also considered there), and, finally, in the recent papers \cite{burde-fial} and
\cite{agore-mil} the term ``Jacobi--Jordan algebras'' was used. The term 
``mock-Lie'' comes from \cite[\S 3.9]{getzler-k}, where the corresponding operad
 appears in the list of quadratic cyclic operads with one generator. Despite 
that Getzler and Kapranov downplayed this class of algebras by (unjustly, in our
opinion) calling them ``pathological'' (basing on the fact that the mock-Lie operad is not Koszul), we
prefer to stick to the term ``mock-Lie''. These algebras live a dual life: as 
members of a very particular class of Jordan algebras, and as strange cousins of
Lie algebras.

Finitely generated mock-Lie algebras are nilpotent and finite-dimensional (see 
remark at the very end of \cite{zhevlakov}; in the class of finite-dimensional
algebras, or, more generally, algebras with maximal condition for subalgebras, 
this follows from the more general result about nilpotency of such Jordan nil-algebras, \cite[\S4.4]{nearly-ass}). The nilpotency index of 
finite-dimensional mock-Lie algebras can be arbitrarily large (take, for example, finitely 
generated subalgebras of either of the algebras presented in 
\cite[\S 4.1, Examples 1,2]{nearly-ass}), and is bounded by $3$ for 
$1$-generated algebras, by $5$ for $2$-generated algebras, by $9$ for 
$3$-generated algebras, and by $n+5$ for $n$-generated algebras, $n\ge 4$ 
(\cite[Main Theorem]{gutierrez}). Any (not necessarily finite-dimensional) 
mock-Lie algebra $L$ satisfies 
$(((L^2 \circ L^2) \circ L^2) \circ L^2) \circ L = 0$: in \cite[Lemma 3]{albert}
this is established via heavy computer calculations, and in 
\cite[Theorem 5]{sverchkov-prep} -- via heavy manipulations by hand with 
nonassociative identities. The same identity is also established in 
\cite[Theorem 1]{jacobs} via a straightforward computation using a computer 
algebra system Albert, though, strictly speaking, the latter cannot be counted 
as a rigorous (computer-assisted) proof, as the identity is being verified there
only in the case of characteristic $p=251$ of the base field. (Albert 
computations can be, however, turned into a rigorous proof by performing the 
same computation for different $p$'s and using a variant of the Chinese 
remainder theorem; also, nowadays one can compute in characteristic zero 
directly, see \S \ref{sec-nonsp} below). Mock-Lie algebras also satisfy the 
$3$-Engel identity $((x \circ y) \circ y) \circ y = 0$ (see, for example, 
\cite[Eq. (**) at p.~3268]{albert}).

Most of the basic definitions from the Lie structure theory are carried over
verbatim to the mock-Lie case. The center of a mock-Lie algebra $L$ is an ideal
consisting of elements $z$ such that $z \circ x = 0$ for any $x\in L$. The 
members of the lower central series are defined inductively as $L^1 = L$, and $L^n = L^{n-1} \circ L$ for $n>1$. The Jacobi identity implies that any 
monomial of length $n$, with arbitrary bracketing, lies in $L^n$. In particular,
$L^k \circ L^{n-k} \subseteq L^n$ for any $0 < k < n$. Thus, a mock-Lie algebra 
$L$ is nilpotent in any conceivable sense (general nonassociative, Jordan, etc.)
if and only if $L^n = 0$ for some $n$.

Representations of mock-Lie algebras may be approached in a universal way which
goes back to Eilenberg and is described nicely in \cite[Chapter II, \S 5]{J}: 
for an algebra $A$ from a given variety, a vector space $V$ is declared to carry
a structure of $A$-bimodule, if the corresponding semidirect sum $A \ltimes V$ 
(with $V$ carrying a trivial multiplication) belongs to the same variety. As 
mock-Lie algebras are commutative, the left and right actions of an algebra 
coincide, so we can speak about just modules. According to this approach, a 
vector space $V$ is a module over a mock-Lie algebra $L$, if there is a linear 
map (a representation) $\rho: L \to \End(V)$ such that 
\begin{equation}\label{eq-repr}
\rho(x \circ y)(v) = - \rho(x)(\rho(y)v) - \rho(y)(\rho(x)v)
\end{equation} 
for any $x,y\in L$ and $v\in V$. Representations of mock-Lie algebras are,
essentially, the same as homomorphisms to associative algebras: if $\rho$ is a
representation, then the map $-2\rho: L \to \End(V)^{(+)}$ is a homomorphism of
Jordan algebras, where the superscript $^{(+)}$ denotes, as usual, the passage 
from the underlying associative algebra to the Jordan algebra structure defined
on the same vector space via 
\begin{equation}\label{eq-circ}
a \circ b = \frac 12(ab + ba) .
\end{equation} 
As any associative algebra $A$ can be embedded into the algebra of all 
endomorphisms of a vector space, any homomorphism of Jordan algebras 
$L \to A^{(+)}$ can be prolonged to a homomorphism $L \to \End(V)^{(+)}$. 
Conversely, any such homomorphism, multiplied by $- \frac 12$, yields a map 
satisfying (\ref{eq-repr}). As in the Lie case, a representation is called 
\emph{faithful}, if its kernel is zero.

As finite-dimensional mock-Lie algebras are nilpotent, their irreducible 
representations are one-dimensional trivial. However, it seems to be interesting
to try to study finite-dimensional indecomposable representations, and to try
to classify finite-dimensional mock-Lie algebras into finite, tame, and wild 
representation types (such study for general Jordan algebras was initiated in 
\cite{kashuba-et-al}).

An entertaining fact (though not related to what follows): algebras over the 
operad Koszul dual to the mock-Lie operad can be characterized in three 
equivalent ways:
\begin{itemize}
\item anticommutative antiassociative algebras;
\item anticommutative $2$-Engel algebras;
\item anticommutative alternative algebras.
\end{itemize}

Here by antiassociative algebras we mean, following \cite{okubo-kamiya} and
\cite{markl-remm}, algebras satisfying the identity $(xy)z = -x(yz)$, and the 
$2$-Engel identity is $(xy)y = 0$.

Another entertaining fact (noted, for example, in \cite{okubo-kamiya}) is that 
mock-Lie algebras can be produced from antiassociative algebras the same way as 
they are produced from associative ones. Namely, given an antiassociative 
algebra $A$, the new algebra $A^{(+)}$ with multiplication given by 
``anticommutator'' (\ref{eq-circ}), is a mock-Lie algebra.

As the tensor product of algebras over Koszul dual operads form a Lie algebra
under the usual commutator bracket, the following question seems to be natural.

\begin{question}
Which ``interesting'' Lie algebras can be represented as the tensor product of
a mock-Lie algebra and an anticommutative antiassociative algebra?
\end{question}

As was noted, for example, in \cite[\S 3.9(d)]{getzler-k}, the mock-Lie operad 
is not Koszul, and hence does not admit a standard (co)homology theory (like,
for example, Lie or associative algebras). This however, does not preclude that
cohomology can be constructed in some ad-hoc, nonstandard, manner. 

In the class of mock-Lie algebras, when extending an algebra $L$ by a linear map
$D: L \to L$ to a semidirect sum $L \ltimes KD$, a role analogous to derivations
in the Lie case is played by antiderivations. Indeed, a necessary condition for
such semidirect sum to be a mock-Lie algebra, is $D$ to be an antiderivation of 
$L$. (This condition is not sufficient: in addition, we should impose conditions
following from the Jacobi identity involving two and three $D$'s; in particular,
since every mock-Lie algebra is $3$-Engel, we have $D^3 = 0$).
Recall that an \emph{antiderivation} of a mock-Lie algebra $L$ is a linear map 
$D: L \to L$ such that $D(x \circ y) = - D(x) \circ y - x \circ D(y)$ for any 
$x,y\in L$. Antiderivations, and, more generally, $\delta$-derivations (where 
$-1$ is replaced by an arbitrary $\delta \in K$) for various classes of algebras
were studied in a number of papers (see, for example, \cite{kayg} with a 
transitive closure of references therein). 

More generally, given a linear map $D: L \to V$ from a mock-Lie algebra $L$ into
an $L$-module $V$ (with the corresponding representation being denoted by 
$\rho$), the vector space $V \dotplus KD$ (here and below the symbol $\dotplus$
is reserved for the direct sum of vector spaces, while $\oplus$ denotes the 
direct sum of algebras) carries an $L$-module structure if and only if $D$ is an antiderivation of $L$ with values in $V$, i.e., 
$$
D(x \circ y) = - \rho(y)D(x) - \rho(x)D(y)
$$
for any $x,y\in L$. The space of all such antiderivations is denoted by 
$\Der_{-1}(L,V)$. The \emph{inner} antiderivations are antiderivations of the 
form $x \mapsto \rho(x)v$ for a fixed $v \in V$.

Interpreting in the standard way outer antiderivations (i.e., the quotient of 
antiderivations by inner antiderivations) as the 1st degree cohomology, and 
writing deformations of modules over mock Lie algebras as power series (\`a la 
Gerstenhaber), what also gives us the idea how the 1st degree cohomology should
look like, we get what might be the beginning of a complex responsible for 
cohomology of a mock-Lie algebra $L$ with coefficients in an $L$-module $V$:
$$
0 \to S^0(L,V) \overset{\dcobound}\to S^1(L,V) \overset{\dcobound}\to S^2(L,V) ,
$$
where $S^n(L,V)$ is the vector space of symmetric multilinear $n$-ary maps 
$L \times \dots \times L \to V$, and the formulas for the differential are:
\begin{align}\label{eq-h1}
\begin{split}
&\dcobound \varphi(x)\phantom{,y,z} = \rho(x) \varphi \\
&\dcobound \varphi(x,y)\phantom{,z} = \varphi(xy) + \rho(x)\varphi(y) + \rho(y)\varphi(x) . \phantom{aaaaaaaaaaaaaaaaaaaaaaaaaaaaaaa}. 
\end{split}
\end{align}

Similarly, writing a deformation of a mock-Lie algebra itself as a formal power
series, we get the part of the complex responsible for the second cohomology:
$$
S^1(L,V) \overset{\dcobound}\to S^2(L,V) \overset{\dcobound}\to S^3(L,V) ,
$$
where
\begin{align}\label{eq-h2}
\begin{split}
&\dcobound \varphi(x,y)\phantom{,z} = \varphi(xy) - \rho(x)\varphi(y) - \rho(y)\varphi(x) 
\\
&\dcobound \varphi(x,y,z) = \varphi(xy,z) + \varphi(zy,x) + \varphi(xz,y) +
\rho(z)\varphi(x,y) + \rho(x)\varphi(z,y) + \rho(y)\varphi(x,z) .
\end{split}
\end{align}

However, a (naive) idea to modify the Chevalley--Eilenberg complex for Lie 
algebra cohomology by injecting appropriately some signs does not work: it is 
not clear how to mangle the two definitions above, for the $1$st and $2$nd 
degree cohomology, together (note the difference in signs in formulas for 
$\dcobound\varphi(x,y)$ in (\ref{eq-h1}) and (\ref{eq-h2})). We may try to 
pursue a more modest goal and to construct just cohomology with trivial 
coefficients; then all terms containing $\rho$ in the formulas 
(\ref{eq-h1})--(\ref{eq-h2}) vanish, and together they give cohomology up to degree $3$; nevertheless, it is still not clear how to 
proceed in higher degrees. (The only sensible cohomology can be constructed in 
this way in characteristic $2$, what is an entirely different story and will be
treated in a separate paper). 

Note also that according to \cite{getzler-k}, an analog of cyclic (co)homology 
of mock-Lie algebras may be constructed, but it is not clear what its utility 
might be.

\begin{question}
Do mock-Lie algebras admit a ``good'' cohomology theory?
\end{question}

Another natural question is whether a mock-Lie algebra $L$ admits not a merely 
homomorphism to an associative algebra, but an embedding, or, what is 
equivalent, whether $L$ admits a faithful representation. (It is well-known that
not every Jordan algebra admits such an embedding; algebras, which do, are 
called \emph{special}; algebras, which do not, are called \emph{exceptional}). 
Note that since an analog of the universal enveloping algebra for 
finite-dimensional Jordan algebras is finite-dimensional, for finite-dimensional
algebras this is equivalent to existence of a \emph{finite-dimensional} faithful
representation, i.e., the validity of the analog of the Ado theorem for Lie 
algebras. 

A similar question about embedding of any mock-Lie algebra into an algebra of 
the form $A^{(+)}$ for an \emph{anti}associative algebra $A$ admits, as noted in
\cite{okubo-kamiya}, a trivial negative answer. Indeed, it is easy to see that in any antiassociative algebra $A$, and hence in any mock-Lie 
algebra of the form $A^{(+)}$, a product of any $4$ elements (under arbitrary
bracketing) vanishes. In particular, any mock-Lie algebra which is embeddable 
into an algebra of the form $A^{(+)}$ for an antiassociative algebra $A$, is 
necessarily nilpotent of index $4$.

We discuss properties of universal enveloping algebras of mock-Lie algebras in 
\S \ref{sec-uj}; in particular, we see how and why the arguments used for Lie 
algebras (notably, the Poincar\'e--Birkhoff--Witt theorem) fail, and establish speciality of mock-Lie algebras of small dimension.
In \S \ref{sec-lieado} we see how and why another chain of arguments, used in
\cite{ado} to establish the Ado theorem not utilizing universal enveloping 
algebras, fails in the mock-Lie case. An example showing that, in general, mock-Lie algebras are not special, was 
given in \cite{hjs}. That and similar examples are discussed in 
\S \ref{sec-nonsp}.

\section{Universal enveloping algebras}\label{sec-uj}

For a Jordan algebra $J$, a natural analog $U(J)$ of the universal enveloping 
algebra is defined as the quotient of the tensor algebra $T(J)$ (= free 
associative algebra freely generated by a basis of $J$) by the ideal generated
by elements of the form 
\begin{equation}\label{eq-id}
\frac 12(x \otimes y + y \otimes x) - x \circ y
\end{equation}
for $x,y \in J$. The restriction to $J$ of the natural homomorphism 
$T(J) \to U(J)$ yields a homomorphism of Jordan algebras 
$\iota: J \to U(J)^{(+)}$. The standard argument shows then that indeed, $U(J)$
is a universal object in the category of such homomorphisms, i.e., for any 
associative algebra $A$ and homomorphism of Jordan algebras 
$\varphi: J \to A^{(+)}$, there is a unique homomorphism of associative algebras
$\psi: U(J) \to A$ such that the diagram
\begin{diagram}
J & \rTo^\varphi & A          \\
  & \rdTo_\iota  & \uTo_\psi  \\ 
  &              & U(J) 
\end{diagram}
commutes. See \cite[Chapter II, \S 1]{J} for further details.

Specializing this construction to the mock-Lie case, one might expect that it 
will play a role similar to the universal enveloping algebra in the Lie case. 
For example, if $L$ is \emph{abelian}, i.e., with trivial multiplication, then
$U(L) = \bigwedge(L)$, the exterior algebra on the vector space $L$. However, 
the major property of Lie-algebraic universal enveloping algebras -- the 
Poincar\'e--Birkhoff--Witt theorem -- fails miserably in the mock-Lie case. The 
shortest and, arguably, the most elegant proof of the 
Poincar\'e--Birkhoff--Witt theorem uses the Gr\"obner bases technique: one 
proves that the set of defining relations of the universal enveloping algebra of
a Lie algebra is closed with respect to compositions and hence forms a Gr\"obner
base, and then the result follows from the composition lemma (see, for example,
\cite[\S 2.6, Example 4]{ufn}; we also follow the terminology adopted there). It
is edifying to look at this proof, and see how it breaks in the mock-Lie case.

If $L$ is a mock-Lie algebra with a basis $\{x_1, \dots, x_n\}$, $U(L)$ is an 
associative algebra with generators $\{x_i\}$ and relations 
\begin{align}\label{eq-rel}
\begin{split}
&x_ix_j + x_jx_i - 2x_i \circ x_j, \quad i<j  \\
&x_i^2 - x_i \circ x_i .                        
\end{split}
\end{align}

If one assumes the lexicographic order on monomials induced by 
$x_1 > x_2 > \dots $, and tries to compute compositions between these relations,
one arrives, after a series of simple compositions and reductions as in the Lie
case, to expressions of the form
\begin{equation}\label{eq-c}
x_i(x_j \circ x_k) + x_j(x_i \circ x_k) + x_k(x_i \circ x_j) .
\end{equation}

Depending on the multiplication table of $L$, further reduction of this 
expression with respect to the defining relations may be possible. In general, 
however, there is no reason why the expression (\ref{eq-c}) should be further 
reduced to zero, and thus, unlike in the Lie case, the relations defining the 
universal enveloping algebra are not closed with respect to composition. Of course, this still does not prove that 
the monomials 
\begin{equation*}
x_{i_1} \dots x_{i_k}, \quad 1 \le i_1 < \dots < i_k \le n
\end{equation*}
cannot form a basis of $U(L)$, but this is a strong
indication of the failure of the Poincar\'e--Birkhoff--Witt theorem. 

\begin{conjecture}\label{conj-pbw}
The mock-Lie algebras for which the Poincar\'e--Birkhoff--Witt theorem holds, 
are exactly the abelian algebras.
\end{conjecture}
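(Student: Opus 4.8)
The plan is to establish both directions, with all of the content in one of them. The direction ``abelian $\Rightarrow$ PBW holds'' is immediate: as already noted, if $L$ is abelian then $U(L)=\bigwedge(L)$, whose ordered squarefree monomials form a basis. So the task is to show that if $L$ is \emph{not} abelian, then the Poincar\'e--Birkhoff--Witt theorem fails, i.e.\ the monomials $x_{i_1}\dots x_{i_k}$ with $i_1<\dots<i_k$ are linearly dependent in $U(L)$. I would first record the (elementary, purely combinatorial) fact that these monomials always \emph{span} $U(L)$: one repeatedly uses the relations (\ref{eq-rel}) to rewrite any monomial as a combination of monomials with indices in increasing order and of monomials of strictly smaller degree, and then inducts. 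Hence ``PBW fails'' is equivalent to ``some nontrivial linear combination of the ordered monomials vanishes in $U(L)$''.

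Next I would do a preliminary normalization. Since the characteristic is $\ne 2$, we have $a\circ b=\frac{1}{2}\big((a+b)^{\circ 2}-a^{\circ 2}-b^{\circ 2}\big)$, so if $a^{\circ 2}=0$ for every $a$ then $L$ is abelian; therefore there is $a\in L$ with $a^{\circ 2}\ne 0$. Moreover $a^{\circ 2}\notin Ka$: if $a^{\circ 2}=\lambda a$ with $\lambda\ne 0$, then $0=a^{\circ 3}=\lambda a^{\circ 2}=\lambda^{2}a\ne 0$, a contradiction. Thus $\{a,a^{\circ 2}\}$ is linearly independent, and I extend it to a basis $x_1=a$, $x_2=a^{\circ 2}$, $x_3,\dots,x_n$ of $L$ (the argument is insensitive to $\dim L$ and works verbatim with a well-ordered basis).

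Now I would exhibit the dependence directly in $U(L)$ with respect to this basis. From (\ref{eq-rel}) one gets, in $U(L)$, that $x_1^{2}=x_1\circ x_1=x_2$, hence $x_1x_2=x_1\cdot x_1^{2}=x_1^{3}=x_1^{2}\cdot x_1=x_2x_1$; and $x_1x_2+x_2x_1=2(x_1\circ x_2)=2(a\circ a^{\circ 2})=2a^{\circ 3}=0$. As $2$ is invertible, these two identities force $x_1x_2=0$ in $U(L)$. But $x_1x_2$ is one of the ordered monomials, so a member of the candidate basis vanishes and PBW fails. Conceptually this is the ``composition'' that the discussion preceding the conjecture stops just short of: besides the overlap of two copies of the first line of (\ref{eq-rel}) inside $x_ix_jx_k$, which produces (\ref{eq-c}), there is the self-overlap of the second line of (\ref{eq-rel}) inside $x_1^{3}$, which reduces modulo the relations to a multiple of $x_1(x_1\circ x_1)$, and $x_1(x_1\circ x_1)$ fails to reduce further to $0$ precisely when $x_1^{\circ 2}\ne 0$.

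The point I expect to be the real (if modest) obstacle is picking the right obstruction. The composition producing (\ref{eq-c}), with $i,j,k$ pairwise distinct, is genuinely not sufficient on its own: for a particular non-abelian $L$ it may, depending on the multiplication table, reduce to $0$ after all, so one cannot deduce the conjecture from it in isolation. One must instead bring in the square relations $x_i^{2}=x_i\circ x_i$ and the resulting ``diagonal'' obstruction, and it is exactly the preliminary normalization that guarantees a basis element $x_1$ with $x_1^{\circ 2}\ne 0$ is available. (As a by-product, the computation above shows that every non-abelian mock-Lie algebra contains the $2$-dimensional subalgebra $Ka\oplus Ka^{\circ 2}$ with $a^{\circ 2}\ne 0$ and $a^{\circ 3}=0$, which is the smallest non-abelian mock-Lie algebra and the minimal witness to the failure of PBW.)
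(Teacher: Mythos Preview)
Your argument is correct, and in fact it proves more than the paper does. In the paper this statement is left as a \emph{conjecture}; only the finite-dimensional case is established (Theorem~\ref{th-pbw}), via two proofs that both ultimately rest on the inequality $\dim U(L) < 2^{\dim L}$. Your approach shares its starting point with the paper's second proof --- namely, the observation (Lemma~\ref{lemma-quadr} in the paper, which you rederive directly from $a^{\circ 3}=0$) that any nonabelian mock-Lie algebra contains an element $a$ with $a$ and $a^{\circ 2}$ linearly independent. But where the paper then argues by a dimension count (since $x_1=x_2^2$ in $U(L)$, the algebra is generated by $n-1$ elements, whence $\dim U(L)\le 3\cdot 2^{n-2}<2^n$), you instead exhibit the explicit relation $x_1x_2=0$ among the ordered monomials. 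This is strictly sharper: it makes no reference to $\dim L$, and so it settles the conjecture in full, not just the finite-dimensional theorem. Phrased basis-freely, you have shown that the canonical surjection $\textstyle\bigwedge^2 L \twoheadrightarrow \mathrm{gr}^2\,U(L)$ kills the nonzero element $a\wedge a^{\circ 2}$, so the associated graded of $U(L)$ cannot be the exterior algebra. Your diagnosis of \emph{which} composition is the decisive one is also to the point: the three-variable overlap producing (\ref{eq-c}) need not survive reduction in every nonabelian $L$, whereas the self-overlap of $x_1^2-x_1\circ x_1$ inside $x_1^3$ always does, once the basis is normalized so that $x_1^{\circ 2}$ is itself a basis vector.
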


For finite-dimensional algebras, the conjecture holds true. 

\begin{theorem}[I. Shestakov]\label{th-pbw}
The finite-dimensional mock-Lie algebras for which the 
Poincar\'e--Birkhoff--Witt theorem holds, are exactly abelian algebras.
\end{theorem}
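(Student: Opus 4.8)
The plan is to prove the two implications separately. One direction is immediate: if $L$ is abelian then $U(L)=\bigwedge(L)$, as already noted, and the ordered squarefree monomials are evidently a basis, so the Poincar\'e--Birkhoff--Witt theorem holds.

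For the converse I would first reformulate the conclusion. Fixing a basis $x_1,\dots,x_n$ of $L$, the defining relations (\ref{eq-rel}) allow one to sort an arbitrary monomial and to eliminate repeated factors, each such operation introducing only monomials of strictly smaller length; hence $U(L)$ is always spanned by the $2^n$ ordered squarefree monomials, and the Poincar\'e--Birkhoff--Witt theorem is equivalent to $\dim_K U(L)=2^n$, i.e.\ to the canonical surjection $\bigwedge(L)\to\operatorname{gr}U(L)$ being an isomorphism. Here $\operatorname{gr}$ is taken with respect to the filtration by length, and $\operatorname{gr}U(L)$ is always a quotient of the exterior algebra because the length-$2$ part $xy+yx$ of the relation $xy+yx-2(x\circ y)$ dies in $\operatorname{gr}_2U(L)$, while $x^2-x\circ x$ has length $\le 1$.

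The heart of the argument is then a pair of identities in $U(L)$, valid for every $a\in L$ (I identify $a$ with its image under $\iota$): since $\iota$ is a homomorphism of Jordan algebras, $a^2=a\circ a$, and since $L$ is mock-Lie, $a^3=\tfrac12\bigl((a\circ a)a+a(a\circ a)\bigr)=(a\circ a)\circ a=0$; in particular $ab=a\cdot a^2=a^3=0$ in $U(L)$, where $b:=a\circ a$. Suppose now that $L$ is not abelian. As the characteristic is $\ne 2$, linearizing shows that $x\circ x=0$ for all $x$ would force $L$ abelian, so I may choose $a\in L$ with $b:=a\circ a\ne 0$, and then also $a\ne 0$. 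If the Poincar\'e--Birkhoff--Witt theorem held, the canonical surjection $\bigwedge^2 L\to\operatorname{gr}_2U(L)$ would be injective, while it sends $a\wedge b$ to the class of $\iota(a)\iota(b)=ab=0$; hence $a\wedge b=0$, i.e.\ $b=\lambda a$ for some $\lambda\in K$. But then $0=a^{\circ 3}=(a\circ a)\circ a=b\circ a=\lambda(a\circ a)=\lambda b=\lambda^2 a$, whence $\lambda=0$ and $b=0$, contradicting the choice of $a$. Therefore the Poincar\'e--Birkhoff--Witt theorem fails for every non-abelian finite-dimensional mock-Lie algebra.

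The step I expect to carry the weight is not any single computation --- the two identities in $U(L)$ and the final scalar manipulation are routine --- but rather recognizing that one should pass to the associated graded algebra: the relation $ab=0$ is invisible in length $\le 1$ (where $\iota$ is injective once the Poincar\'e--Birkhoff--Witt theorem is assumed), and only the identification of $\operatorname{gr}_2U(L)$ with a quotient of $\bigwedge^2 L$ turns this single relation into an obstruction to the exterior-algebra structure. The one piece of bookkeeping I would check carefully is that the canonical surjection $\bigwedge^2 L\to\operatorname{gr}_2U(L)$ indeed sends $a\wedge b$ to the class of $ab$; after that the proof closes.
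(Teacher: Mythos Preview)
Your proof is correct. It is close in spirit to the paper's second proof, though packaged differently. The paper gives two arguments that $\dim U(L)<2^{\dim L}$ whenever $L$ is nonabelian. The first proceeds by induction on $\dim L$, using a multiplicative bound $\dim U(L)\le\dim U(L/I)\cdot\dim U(I)$ together with a classification of the ``minimal'' nonabelian mock-Lie algebras (those whose proper ideals and quotients are all abelian), which are then handled by direct computation. The second proof, like yours, hinges on the identity $a^{3}=0$ in $U(L)$: the paper first invokes a separate lemma on arbitrary commutative algebras (Lemma~\ref{lemma-quadr}) to produce $a\in L$ with $a$ and $a\circ a$ linearly independent, completes this to a basis $x_{1}=a\circ a,\ x_{2}=a,\ x_{3},\dots,x_{n}$, and observes that $U(L)$ is then generated by $x_{2},\dots,x_{n}$ alone (since $x_{1}=x_{2}^{2}$), giving the explicit estimate $\dim U(L)\le 3\cdot 2^{\,n-2}$.

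Your argument uses the same relation $a\cdot(a\circ a)=a^{3}=0$, but detects it in $\operatorname{gr}_{2}U(L)$ rather than via an explicit dimension count; and you replace the lemma producing an independent pair $a,\,a\circ a$ by a direct contrapositive: if $a\circ a=\lambda a$ with $\lambda\ne 0$, the nil-$3$ identity forces $a\circ a=0$. This is a mild simplification --- you avoid both the auxiliary classification of the first proof and Lemma~\ref{lemma-quadr} of the second --- at the cost of not obtaining a quantitative bound on $\dim U(L)$. One small wording issue: when you say each sorting step ``introduces only monomials of strictly smaller length'', the reordered monomial itself keeps the same length; it is only the correction term $2\,x_{i}\circ x_{j}$ that is shorter. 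The induction (on length, then on the number of inversions) still goes through, so this does not affect the argument.
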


The validity of the Poin\-ca\-r\'e--Birk\-hoff--Witt theorem for a mock-Lie 
algebra $L$ implies that $\dim U(L) = \dim \bigwedge(L) = 2^{\dim L}$. We will 
establish Theorem \ref{th-pbw} by providing two different proofs of the fact 
that for a finite-dimensional nonabelian mock-Lie algebra $L$, 
$\dim U(L) < 2^{\dim L}$. 

For the first proof, we need two simple lemmas.

\begin{lemma}\label{lemma-ul}
Let $L$ be a mock-Lie algebra, and $I$ its ideal. Then 
$\dim U(L) \le \dim U(L/I) \cdot \dim U(I)$.
\end{lemma}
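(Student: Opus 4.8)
The plan is to exploit the universal property of $U(-)$ to produce a surjective algebra homomorphism $U(L) \twoheadrightarrow U(L/I) \otimes U(I)$, after first constructing a suitable Jordan homomorphism out of $L$. First I would note that the quotient map $\pi\colon L \to L/I$ and the composition $L \to U(L/I)^{(+)}$ give a Jordan homomorphism $L \to U(L/I)^{(+)}$, hence by universality an algebra homomorphism $\alpha\colon U(L) \to U(L/I)$. Separately, the inclusion $I \hookrightarrow L$ — and here one must be a little careful, since $I$ is only an ideal, not a subalgebra complemented as an algebra — still gives a Jordan homomorphism $I \to L \to U(L)^{(+)}$ and hence an algebra homomorphism $U(I) \to U(L)$; but what I actually want goes the other way. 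The cleaner route: pick a vector-space splitting $L = I \dotplus C$ with $\pi|_C$ a linear isomorphism onto $L/I$, and define a linear map $\beta\colon L \to \big(U(L/I)\otimes U(I)\big)^{(+)}$ by sending $c\in C$ to $\iota_{L/I}(\pi(c))\otimes 1$ and $a\in I$ to $1 \otimes \iota_I(a)$, extended linearly.

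The key step is then to check that $\beta$ is a homomorphism of Jordan algebras, i.e. that $\beta(x\circ y) = \frac12(\beta(x)\beta(y) + \beta(y)\beta(x))$ for all $x,y\in L$. This splits into three cases according to whether the arguments lie in $C$ or in $I$. For two elements of $C$, write $c_1\circ c_2 = a + c_3$ with $a\in I$, $c_3\in C$; the first-tensor-factor part matches because $\iota_{L/I}$ is a Jordan homomorphism and $\pi(c_1\circ c_2) = \pi(c_3)$, while the $I$-component $a$ must be absorbed — this is exactly the place where the inequality (rather than equality) in the lemma comes from, and it works because in $U(L/I)\otimes U(I)$ we are free to map the troublesome term however the relations allow; concretely, one checks that the defining relations of $U(L)$ from \eqref{eq-rel} are respected. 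For one element of $C$ and one of $I$, the product $c\circ a$ lies in $I$ (as $I$ is an ideal), and one verifies $\frac12(\beta(c)\beta(a)+\beta(a)\beta(c)) = \iota_{L/I}(\pi(c))\otimes\iota_I(a)$ reduces correctly — this is the case that genuinely uses the tensor structure. For two elements of $I$, everything happens in the second tensor factor and follows from $\iota_I$ being a Jordan homomorphism.

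Granting that $\beta$ is a Jordan homomorphism, universality yields an algebra homomorphism $\widehat\beta\colon U(L) \to U(L/I)\otimes U(I)$. I would then argue $\widehat\beta$ is surjective: its image is a subalgebra containing all $\iota_{L/I}(\pi(c))\otimes 1$ and all $1\otimes \iota_I(a)$, and since the $\iota$'s generate the respective enveloping algebras and the two families of images commute with each other's images only up to the relations — more precisely, the image contains $U(L/I)\otimes 1$ and $1 \otimes U(I)$, whose products span $U(L/I)\otimes U(I)$. Surjectivity gives $\dim U(L) \ge \dim\big(U(L/I)\otimes U(I)\big) = \dim U(L/I)\cdot\dim U(I)$ — which is the reverse of what is claimed. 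So I must have the arrow backwards, and the correct construction is dual: one builds a Jordan homomorphism $L \to U(L)^{(+)}$ compatible with filtering $U(L)$ by powers of the ideal generated by $I$, showing the associated graded of $U(L)$ is a quotient of $U(L/I)\otimes U(I)$, whence $\dim U(L) = \dim\operatorname{gr}U(L) \le \dim U(L/I)\cdot \dim U(I)$.

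The main obstacle I anticipate is precisely pinning down this filtration argument rigorously: one wants the two-sided ideal $\mathfrak{a} \subseteq U(L)$ generated by the image of $I$, and to show $U(L)/\mathfrak a$ is a homomorphic image of $U(L/I)$ while the associated graded $\bigoplus \mathfrak a^k/\mathfrak a^{k+1}$ is generated over $U(L/I)$ by the image of $I$ subject to (at least) the relations of $U(I)$. The subtlety is that $I$ being merely an ideal means the relation \eqref{eq-id} for $x,y\in I$ holds in $U(L)$, giving a homomorphism $U(I)\to U(L)$ whose image is $\mathfrak a + K\cdot 1$ as an algebra — but one needs enough control to bound dimensions multiplicatively, which is where a careful choice of monomial spanning set, ordering $I$-generators after $C$-generators and repeatedly applying \eqref{eq-rel} to push $I$-generators to one side, does the job.
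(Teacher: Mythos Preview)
Your first construction does not work, and not for the reason you diagnose. The map $\beta$ is simply not a Jordan homomorphism. In the mixed case $c\in C$, $a\in I$ you have $c\circ a\in I$, so $\beta(c\circ a)=1\otimes\iota_I(c\circ a)$, whereas $\beta(c)\circ\beta(a)=\iota_{L/I}(\pi(c))\otimes\iota_I(a)$; these live in different tensor components and are almost never equal (take $c\circ a=0$ with $\pi(c)\ne 0$, $\iota_I(a)\ne 0$). The $C\times C$ case fails for the same reason: writing $c_1\circ c_2=a+c_3$ you get an unwanted term $1\otimes\iota_I(a)$ on the left which does not appear on the right. So there is no algebra map $U(L)\to U(L/I)\otimes U(I)$ to speak of, and the ``wrong direction'' observation is moot.

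The filtration idea could be made to work, but it is overkill. What you write in your very last clause --- order the $I$-generators after the $C$-generators and use the relations \eqref{eq-rel} to push $I$-generators to the right --- is the entire proof, and it is exactly what the paper does. Concretely: pick a basis $\{x_i\}\cup\{y_j\}$ of $L$ with the $y_j$ a basis of $I$. The relation $uv=-vu+2(u\circ v)$ in $U(L)$ lets you move any $y_j$ past any $x_i$ at the cost of a correction of lower degree lying in $I$ (since $I$ is an ideal). Iterating, every element of $U(L)$ is a linear combination of products $XY$ with $X$ a monomial in the $x_i$ and $Y$ a monomial in the $y_j$. The $X$'s are subject to the relations of $U(L/I)$ (modulo $I$), the $Y$'s to those of $U(I)$; hence such products are indexed by a spanning set of $U(L/I)$ times a spanning set of $U(I)$, and the inequality follows. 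No filtration, no associated graded, no tensor-product target needed.
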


\begin{proof}
Every element in $U(L)$ can be represented as the sum of products $XY$, where 
$X$ is a monomial of the form $x_{i_1} \dots x_{i_k}$, $x$'s belong to 
representatives of the cosets in $L/I$, and $Y$ is a monomial of the form 
$y_{j_1} \dots y_{j_{\ell}}$, $y$'s belong to $I$. The elements $X$ are subject
to the same relations (\ref{eq-rel}) modulo $I$, and elements $Y$ are subject to
relations (\ref{eq-rel}), with $x$'s replaced by $y$'s (of course, there could
be additional relations in $U(L)$ between $x$'s and $y$'s, but we do not care 
here about them). Thus all $X$'s linearly span $U(L/I)$, all $Y$'s linearly span
$U(I)$, and the statement of the lemma follows.
\end{proof}

\begin{lemma}\label{lemma-ab}
A nonabelian mock-Lie algebra such that all its proper ideals and all its proper
quotients are abelian, is isomorphic to one of the following algebras:
\begin{enumerate}[\upshape(i)]
\item $2$-dimensional algebra $\langle a,b \vertbar a^2 = b \rangle$;
\item $3$-dimensional algebra $\langle a,b,c \vertbar ab = c \rangle$;
\item 
$3$-dimensional algebra $\langle a,b,c \vertbar a^2 = c;\> ab = c \rangle$.
\end{enumerate}
\end{lemma}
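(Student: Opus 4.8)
The plan is to strip $L$ down to a very small algebra by first controlling its lower central series and the action of $L$ on $L^2$, and then to read off the list by a direct classification of the few nonabelian algebras that can survive.

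I would first show $L^3=0$. That $L^2\ne L$ holds for any nonzero mock-Lie algebra: substituting $L$ into the identity $(((L^2\circ L^2)\circ L^2)\circ L^2)\circ L=0$ recalled in \S\ref{sec-def}, the left-hand side collapses to $L$ whenever $L^2=L$, forcing $L=0$. Hence $L^2$ is a proper ideal, so abelian by hypothesis, i.e.\ $L^2\circ L^2=0$. Now suppose $L^3=L^2\circ L\ne 0$; then $L^3$ is a nonzero proper ideal, so the proper quotient $L/L^3$ is abelian, giving $L^2\subseteq L^3$ and hence $L^2=L^3=L^4=\cdots$. I would then regard $M:=L^2$ as an $L$-module via $\rho(x)v=x\circ v$ (the module axiom (\ref{eq-repr}) for an ideal is exactly the Jacobi identity). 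Since $M\circ M=0$ one gets $\rho(M)=0$, so (\ref{eq-repr}) forces $\rho(x)\rho(y)+\rho(y)\rho(x)=0$ and thus $\rho(x)^2=0$ for all $x$; the associative subalgebra of $\End(M)$ generated by $\rho(L)$ is then spanned by products of pairwise anticommuting square-zero operators, hence locally nilpotent. But $M$ is an irreducible $L$-module — its submodules are precisely the ideals of $L$ inside $M$, and a nonzero one contains $L^2=M$ by the quotient hypothesis — and a locally nilpotent associative algebra cannot act both faithfully and irreducibly on a nonzero space, so the action is trivial: $L^3=L\circ M=0$, contradiction. Thus $L^3=0$ and $L^2$ is central. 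Next I would note $\dim L^2=1$: every subspace of $L^2$ is an ideal (as $L\circ L^2=0$), so if $\dim L^2\ge 2$ a line in $L^2$ is a nonzero proper ideal not containing $L^2$, contradicting that every nonzero ideal contains $L^2$; and $L^2\ne 0$ since $L$ is nonabelian.

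With $L^2=Kc$ central and $L^3=0$, the multiplication descends to a symmetric bilinear form $\beta$ on $V:=L/Kc$ with $\bar x\circ\bar y=\beta(\bar x,\bar y)c$, which is nonzero ($L$ nonabelian) and nondegenerate (a nonzero radical vector lifts to some $v\in L$ with $Kv$ a nonzero ideal missing $c$). Moreover the preimage in $L$ of any subspace $U\subseteq V$ is automatically an ideal, since it contains $Kc$ and its product with $L$ lands in $L^2=Kc$; and that ideal is abelian exactly when $\beta|_U=0$. Matching this dictionary against the remaining hypotheses restricts $\dim V$ to at most $2$ and pins $\beta$ down to the forms occurring in (i)--(iii); I would finish by classifying nonzero nondegenerate symmetric bilinear forms in dimensions $1$ and $2$, up to isometry and rescaling of $c$, and matching each with one of the three listed algebras.

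The main obstacle is the first part — seeing that $L^3$ must vanish rather than merely stabilize — together with making the final enumeration simultaneously exhaustive (nothing of dimension larger than $3$ slips through) and free of spurious algebras.
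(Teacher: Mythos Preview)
Your route is genuinely different from the paper's. The paper enlarges $L^2$ to an abelian codimension-one ideal $I$, writes $L = I \dotplus Ka$, and analyzes the single nilpotent operator $f = a\circ(-)$ on $I$; it only ever invokes the hypotheses for the \emph{particular} ideals arising as $f$-invariant subspaces of $I$, which is why its output list is longer than strictly necessary. You instead prove $L^3 = 0$ directly by a clean module-theoretic argument (the $\rho(x)$ anticommute and square to zero on $M=L^2$, so the algebra they generate is locally nilpotent and cannot act irreducibly), then reduce everything to a nondegenerate symmetric form $\beta$ on $V=L/L^2$. This is perfectly sound and in fact sharper.

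The confusion is in your last step. Once you have the dictionary ``proper nonzero ideals $\leftrightarrow$ proper subspaces $U\subsetneq V$, abelian $\leftrightarrow$ $\beta|_U=0$'', the hypothesis that \emph{every} proper ideal is abelian forces every line in $V$ to be isotropic; in characteristic $\ne 2$ this polarizes to $\beta=0$ whenever $\dim V\ge 2$, contradicting nondegeneracy. So your method yields $\dim V=1$ and $L\cong$ algebra (i) \emph{only} --- not the full list (i)--(iii). You can check directly that (ii) and (iii) each have a nonabelian $2$-dimensional ideal (e.g.\ $K(a+b)+Kc$ in (ii), $Ka+Kc$ in (iii)), so they do not actually satisfy the hypotheses; the paper's list is an over-approximation adequate for its application. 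Your argument therefore proves the lemma (and more), but your claim to recover all three algebras via forms in dimensions $1$ and $2$ is wrong: carried through honestly, your bilinear-form analysis terminates at dimension $1$.
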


In the multiplication tables between basic elements of algebras, we specify the
nonzero products only. Mock-Lie algebras of low dimension over an algebraically
closed field were described in \cite{burde-fial}. Algebra from heading (i) is 
denoted as $A_{12}$ in this classification, and is the only nonabelian 
$2$-dimensional algebra. Algebra from heading (ii), the 
``commutative Heisenberg algebra'', is isomorphic to the algebra $A_{13}$ (see 
\cite[Remark 3.3]{burde-fial}), and algebra from heading (iii) is isomorphic to 
$A_{12} \oplus A_{01}$, the direct sum of algebra from heading (i) and 
one-dimensional abelian algebra.

\begin{proof}[Proof of Lemma \ref{lemma-ab}]
Let $L$ be a mock-Lie algebra as specified in the condition of the lemma.
Since $L^2$ is a proper ideal of $L$, it can be enlarged to an ideal $I$ of 
codimension one, which is abelian. Write $L = I \dotplus Ka$ for some element 
$a \notin I$, and $a \circ x = f(x)$ for $x\in I$, where $f: I \to I$ is a 
linear map. The condition $a^{\circ 3} = 0$ implies $a^{\circ 2} \in \Ker f$, 
and the Jacobi identity for triple $(a,a,x)$, where $x\in I$, implies $f^2 = 0$.

If $V$ is a proper $f$-invariant subspace of $I$, then it is a proper ideal of 
$L$, and the quotient $L/V = I/V \dotplus Ka$ is abelian, i.e., 
$\im f \subseteq V$ and $a^{\circ 2} \in V$. So $f$ is a nilpotent of index 
$\le 2$ linear map such that its image lies in any proper invariant subspace, 
what implies that either $f=0$, or $f$ is a map on a $2$-dimensional vector 
space, represented by the matrix $\begin{pmatrix} 0 & 1 \\ 0 & 0 \end{pmatrix}$
in the canonical basis. Moreover, $a^{\circ 2}$ lies in any proper $f$-invariant
subspace, what implies that either $a^{\circ 2} = 0$, or $f$ is the zero map on
an one-dimensional vector space, or, again, $f$ is a map on a $2$-dimensional 
vector space, represented by the matrix 
$\begin{pmatrix} 0 & 1 \\ 0 & 0 \end{pmatrix}$ in the canonical basis.
Combining all these possibilities, we get the claimed list of algebras.
\end{proof}

\begin{proof}[First proof of Theorem \ref{th-pbw}]
Let us prove by induction on dimension of $L$, that for a nonabelian mock-Lie
algebra $L$, $\dim U(L) < 2^{\dim L}$. The only one-dimensional mock-Lie algebra
is abelian, so the statement is vacuously true for $\dim L = 1$. The induction 
step: if $L$ has either a nonabelian ideal, or nonabelian quotient, we are done
by Lemma \ref{lemma-ul}, otherwise $L$ is one of the algebras described in 
Lemma \ref{lemma-ab}, which can be dealt with directly (see also computer 
calculations below).
\end{proof}

Now we turn to the second proof.

\begin{lemma}\label{lemma-quadr}
For any commutative algebra $A$, one of the following holds:
\begin{enumerate}[\upshape(i)]
\item $A$ is algebra with trivial multiplication;
\item
$A$ is of the form $V \dotplus Ka$, where $V$ is a vector space with trivial 
multiplication, $a \circ x = x$ for any $x \in V$, and $a^{\circ 2} = 2a$;
\item
$A$ contains an element $x$ such that $x$ and $x^{\circ 2}$ are linearly 
independent.
\end{enumerate}
\end{lemma}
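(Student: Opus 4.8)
The plan is to negate alternative~(iii) and show that then $A$ is of type~(i) or~(ii). Failure of~(iii) means precisely that $x^{\circ 2}\in Kx$ for every $x\in A$; since for $x\ne 0$ the relevant scalar is unique, this defines a function $\lambda\colon A\to K$ by $x^{\circ 2}=\lambda(x)\,x$ (with $\lambda(0):=0$). I begin with a dichotomy. If $\lambda\equiv 0$, i.e.\ $x^{\circ 2}=0$ for all $x$, then linearizing this identity, using commutativity and $\operatorname{char}K\ne 2$, gives $2(x\circ y)=0$, hence $x\circ y=0$ for all $x,y$, so $A$ is of type~(i). Otherwise there is $a\in A$ with $\lambda(a)\ne 0$; replacing $a$ by $\tfrac{2}{\lambda(a)}a$ we may assume $a^{\circ 2}=2a$, and note that then $(ca)^{\circ 2}=2c^2a$ forces $\lambda(ca)=2c$ for every scalar $c$.

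The main step is to compute $a\circ x$ for an arbitrary $x\in A$. Suppose first $x\notin Ka$, so that $a+x$ and $a-x$ are both nonzero and $\{a,x\}$ is linearly independent; write $a\circ x=\varphi a+\psi x$. By the negated~(iii), each of $(a+x)^{\circ 2}$ and $(a-x)^{\circ 2}$ is a scalar multiple of $a+x$, resp.\ $a-x$; expanding these two squares with the help of $a^{\circ 2}=2a$ and $x^{\circ 2}=\lambda(x)\,x$ and comparing the coefficients of $a$ and of $x$ on both sides yields two linear equations in $\varphi$ and $\psi$ whose solution (solving the $2\times2$ system uses $\operatorname{char}K\ne 2$) is $\varphi=\tfrac12\lambda(x)$, $\psi=1$. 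Thus $a\circ x=\tfrac12\lambda(x)\,a+x$ whenever $x\notin Ka$; for $x=ca\in Ka$ both sides equal $2ca$ (here one uses $\lambda(ca)=2c$), so the formula $a\circ x=\tfrac12\lambda(x)\,a+x$ holds for every $x\in A$. Since $x\mapsto a\circ x$ and $x\mapsto x$ are linear and $a\ne 0$, this forces $\lambda$ to be a linear functional, with $\lambda(a)=2$.

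Finally put $V:=\Ker\lambda$; then $A=V\dotplus Ka$ because $\lambda(a)\ne 0$. For $x\in V$ the formula above gives $a\circ x=x$, and $a^{\circ 2}=2a$ by construction, so it remains only to see that $V$ has trivial multiplication. But for $x\in V$ we have $x^{\circ 2}=\lambda(x)\,x=0$, and since $V$ is a linear subspace, linearizing this identity within $V$ (again via commutativity and $\operatorname{char}K\ne 2$) gives $x\circ y=0$ for all $x,y\in V$. Hence $A$ is of type~(ii), and the lemma follows.

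The only place requiring care is the coefficient computation in the second paragraph: one must keep track of the two \emph{a priori} distinct scalars attached to $a+x$ and to $a-x$, and the argument genuinely fails to pin down $\varphi$ (the linear system degenerates) exactly when $x\in Ka$, which is why that case is disposed of by a separate direct check. Everything else is routine linearization.
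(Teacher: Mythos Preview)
Your argument is correct and follows essentially the same route as the paper's proof: negate (iii), so that $x^{\circ 2}=\lambda(x)x$, and linearize to produce a linear functional governing the multiplication, then split off its kernel. The only difference is organizational: the paper asserts in one stroke that linearization yields $x\circ y=\alpha(y)x+\alpha(x)y$ for a \emph{linear} $\alpha$ and then picks $a$ with $\alpha(a)=1$, whereas you first fix $a$ with $\lambda(a)\ne 0$, compute $a\circ x$ via the two polarizations $(a\pm x)^{\circ 2}$, and then read off the linearity of $\lambda$ from that of $x\mapsto a\circ x$. Your version is more explicit where the paper is terse; one small expository point is that when you ``write $a\circ x=\varphi a+\psi x$'' you are tacitly using that $a\circ x\in\mathrm{span}\{a,x\}$, which is in fact delivered by expanding $(a+x)^{\circ 2}=\lambda(a+x)(a+x)$ alone --- you might say so before setting up the $2\times 2$ system.
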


Note that the algebra $A$ here is not assumed to be mock-Lie, or Jordan, or 
to satisfy any other distinguished identity beyond commutativity. We continue to
denote the binary multiplication by $\circ$.

\begin{proof}
Suppose that for any element $x$ of the algebra $A$, $x$ and $x^{\circ 2}$ are 
linearly dependent, i.e., for any $x\in A$, there is 
$\lambda(x) \in K$ such that 
\begin{equation}\label{eq-l}
x^{\circ 2} = \lambda(x) x .
\end{equation}
Linearizing this equality, and using bilinearity and commutativity of $\circ$, 
we get that there exists a linear map $\alpha: A \to K$ such that 
$x \circ y = \alpha(y)x + \alpha(x)y$ for any $x,y\in A$. If $\alpha$ is the 
zero map, the multiplication in $A$ is trivial. Otherwise, set 
$V = \Ker \alpha$. Since $V$ is of codimension one in $A$, we may write 
$A = V \dotplus Ka$, and normalize $\alpha$ by assuming $\alpha(a) = 1$. The 
rest is obvious.
\end{proof}

Algebras with the condition that any element satisfies a cubic polynomial, or,
in other words, algebras of rank $\le 3$ (of which the condition to be of 
nil index $3$ is the particular case) were studied in a number of papers -- see,
for example \cite{walcher} and references therein. The condition (\ref{eq-l}) is
the quadratic particular case, i.e., it defines algebras of rank $\le 2$ 
(without the free term in the defining polynomial equation), and headings (i) 
and (ii) of Lemma \ref{lemma-quadr} give an easy description of such 
commutative algebras. Note that algebra in (ii) is Jordan, but not mock-Lie.

\begin{proof}[Second proof of Theorem \ref{th-pbw}]
Obviously, for abelian mock-Lie algebras the Poincar\'e--Birkhoff--Witt theorem
holds. Conversely, suppose $L$ is a nonabelian mock-Lie algebra of dimension 
$n$. By Lemma \ref{lemma-quadr}, there is $x_2\in L$ such that $x_2$ and 
$x_1 = x_2^{\circ 2}$ are linearly independent. Complete $x_1, x_2$ to a basis 
$\{x_1, x_2, x_3, \dots, x_n\}$ of $L$. Since in $U(L)$ holds $x_1 = x_2^2$, 
$U(L)$ is generated, as an algebra with unit, by $n-1$ elements 
$\{x_2, x_3, \dots, x_n\}$, and hence is linearly spanned by monomials of
the form $x_2^k x_{i_1} \dots x_{i_\ell}$, where $0 \le k \le 2$, 
$3 \le i_1 < \dots < i_\ell \le n$; consequently, 
$\dim U(L) \le 3 \cdot 2^{n-2} < 2^n$.
\end{proof}

In fact, the second proof ``almost'' establishes Theorem \ref{th-pbw} in the 
broader class of all Jordan algebras: the finite-dimensional Jordan algebras for
which the Poincar\'e--Birkhoff-Witt theorem holds, are exactly algebras with 
trivial multiplication, and algebras specified in Lemma \ref{lemma-quadr}(ii).

Direct calculations, performed with the aid of GAP \cite{gap} and GAP package 
GBNP \cite{gbnp}, show that, as a rule, the dimension of $U(L)$ is much smaller
than $2^{\dim L}$. In the table below, the left column indicates the name of a 
mock-Lie algebra, and the right column the dimension of its universal enveloping
algebra. The table contains all nonabelian algebras of dimension $\le 5$, and 
all nonassociative algebras of dimension $6$. We follow the nomenclature of 
\cite{burde-fial}, with the exception of $\mathfrak L$ which denotes the unique,
over an algebraically closed field, $5$-dimensional nonassociative mock-Lie 
algebra (see \cite[Proposition 4.1]{burde-fial}). In the family of 
$6$-dimensional algebras $A_{26}(\beta,0)$ and $A_{26}(\beta,1)$, the parameter
$\beta$ assumes values $0,1$.

\begin{center}
\begin{tabular}[t]{|l|r|}

\multicolumn{2}{}{}             \\
\multicolumn{2}{c}{dimension 2} \\
\hline
$A_{12}$ & 3  \\
\hline

\multicolumn{2}{}{}             \\
\multicolumn{2}{c}{dimension 3} \\
\hline
$A_{12} \oplus A_{01}$ & 5  \\
$A_{13}$               & 5  \\
\hline

\multicolumn{2}{}{}              \\
\multicolumn{2}{c}{dimension 4}  \\
\hline
$A_{14}$                             & 7  \\
$A_{24}$                             & 9  \\
$A_{12} \oplus A_{01} \oplus A_{01}$ & 9  \\
$A_{13} \oplus A_{01}$               & 9  \\
$A_{12} \oplus A_{12}$               & 6  \\
\hline

\end{tabular}
\qquad
\begin{tabular}[t]{|l|r|}

\multicolumn{2}{}{}             \\
\multicolumn{2}{c}{dimension 5} \\
\hline
$A_{15}$                                           &  9 \\
$A_{25}$                                           & 10 \\
$A_{35}$                                           & 10 \\
$A_{45}$                                           & 10 \\
$A_{55}$                                           & 10 \\
$A_{65}$                                           & 11 \\
$A_{75}$                                           & 17 \\
$A_{12} \oplus A_{01} \oplus A_{01} \oplus A_{01}$ & 17 \\
$A_{13} \oplus A_{01} \oplus A_{01}$               & 17 \\
$A_{24} \oplus A_{01}$                             & 17 \\
$A_{12} \oplus A_{12} \oplus A_{01}$               & 10 \\
$A_{12} \oplus A_{13}$                             & 10 \\
$A_{14} \oplus A_{01}$                             & 11 \\
$\mathfrak L$                                      &  9 \\      
\hline

\end{tabular}
\qquad
\begin{tabular}[t]{|l|r|}

\multicolumn{2}{}{}             \\
\multicolumn{2}{c}{dimension 6} \\
\hline
$A_{16}$           & 11  \\
$A_{26}(\beta,0)$  & 13  \\
$A_{26}(\beta,1)$  & 12  \\
\hline

\end{tabular}
\end{center}

\bigskip

Note also that, unlike in the Lie case, $U(L_1 \oplus L_2)$ is not isomorphic to
(in fact, in most of the cases much smaller than) $U(L_1) \otimes U(L_2)$.

\begin{theorem}\label{th-l4}
For any mock-Lie algebra $L$, the kernel of the map $\iota: L \to U(L)$ lies in
$L^4$.
\end{theorem}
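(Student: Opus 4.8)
The plan is to show that the kernel of $\iota$ is contained in $L^4$ by reducing modulo $L^4$ and checking the claim for the finite-dimensional nilpotent algebra $L/L^4$, whose structure is extremely restricted. More precisely, I would first argue that it suffices to treat the case $L^4 = 0$. Indeed, if $x \in L$ and $\iota(x) = 0$ in $U(L)$, I want to conclude $x \in L^4$; the universal property of $U(\cdot)$ gives a surjection $U(L) \to U(L/L^4)$ compatible with the canonical maps $\iota_L$ and $\iota_{L/L^4}$, so the image $\bar x$ of $x$ in $L/L^4$ satisfies $\iota_{L/L^4}(\bar x) = 0$. Hence, once we know that $\iota$ is injective for every mock-Lie algebra satisfying $L^4 = 0$, we are done. (Note $L/L^4$ is generated by finitely many elements only if $L$ is, so strictly speaking one should either assume $L$ finite-dimensional, or argue directly that the relevant identity passes to the not-necessarily-finitely-generated case; since $L^4=0$ forces nilpotency of index $4$, and every element of $L$ lies in a finitely generated — hence finite-dimensional — subalgebra, I would localize the argument to finitely generated subalgebras and use that $U$ behaves well with respect to the directed union.)

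Next, for a mock-Lie algebra $M$ with $M^4 = 0$, I would analyze $U(M)$ directly. Fix a basis $\{x_1,\dots,x_n\}$ of $M$ adapted to the filtration $M \supseteq M^2 \supseteq M^3 \supseteq M^4 = 0$. In $U(M)$ the defining relations (\ref{eq-rel}) let us rewrite any product $x_i x_j$ with $i \geq j$ in terms of $x_j x_i$ (for $i>j$) or $x_i\circ x_i$ (for $i=j$), plus a ``lower'' term lying in the image of $M^2$. The key point is that since $M^4 = 0$, the associator-type obstructions (\ref{eq-c}) — which are the sole reason PBW fails — become controllable: an expression $x_i(x_j\circ x_k) + x_j(x_i\circ x_k) + x_k(x_i\circ x_j)$ has all of $x_j\circ x_k$, etc., in $M^2$, so it lives in $M \cdot \iota(M^2)$, and iterating the Jacobi identity / the relations one more time pushes the ambiguity into $\iota(M^2)\cdot\iota(M^2)$ and then into $\iota(M^3)\cdot M$, which vanishes because any product of elements, three of which lie in $M$ with one in $M^3$, is a monomial of length $\geq 4$ in $M$, hence zero. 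So modulo this bookkeeping, $U(M)$ does have a spanning set by ordered monomials, and the linear map $M \to U(M)$ sends the basis element $x_i$ to the degree-one ordered monomial $x_i$, which is part of that spanning set.

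From here, injectivity of $\iota$ on $M$ follows if I can exhibit enough linear functionals on $U(M)$, or — more cleanly — a concrete associative algebra $A$ and a Jordan homomorphism $\varphi: M \to A^{(+)}$ that is injective; then $\iota$ is injective because $\varphi$ factors through $\iota$. A natural candidate is the ``split null extension'' type construction: take $A = K \dotplus M \dotplus (M\otimes M)/{\sim} \dotplus \dots$ truncated using $M^4=0$, with an explicit associative product mimicking left multiplication, and let $\varphi$ be $x \mapsto$ (left multiplication by $x$) $+ \tfrac12 x$ or similar, chosen so that (\ref{eq-circ}) reproduces $\circ$; injectivity is visible from the degree-one component. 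Alternatively — and this is probably the slicker route — use the regular representation: since every mock-Lie algebra is nilpotent here, one can try to build a faithful representation of $M$ directly from its (left) multiplication operators together with a correction term, exploiting that the failure of faithfulness of the adjoint representation is measured precisely by the annihilator of $M^2$, which one patches in a separate summand.

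The main obstacle I expect is the second paragraph: carefully tracking how the obstruction (\ref{eq-c}) propagates through the reduction and genuinely terminates in $\iota(M^3)\cdot M = 0$, rather than merely ``should terminate.'' The Jacobi identity relates $x_i(x_j\circ x_k)$ symmetrized over the three indices, but reducing a single such term requires re-expressing $x_j\circ x_k$ as an element of $M$ and then re-ordering, which can reintroduce new products with a factor in $M^2$; one must verify these all have total filtration degree $\geq 4$. Getting the degree count exactly right — that any leftover ambiguity after two reduction passes is a sum of monomials in which the product of the filtration degrees of the factors is $\geq 4$ — is the crux, and is where the hypothesis $L^4 = 0$ (as opposed to, say, $L^5 = 0$) is used in an essential way. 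Once that is nailed down, the faithfulness statement in the last paragraph is comparatively routine.
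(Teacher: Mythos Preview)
Your reduction to the case $L^4 = 0$ is correct and is a legitimately different opening from the paper's. Functoriality of $U(\cdot)$ does give the commuting square you need, and the localization to finitely generated subalgebras is fine since $\Ker\iota$ is detected elementwise. After this step, the theorem is equivalent to the assertion ``every mock-Lie algebra $M$ with $M^4=0$ is special.''

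The gap is that you have not proved that assertion; you have restated it. Your Gr\"obner-basis sketch conflates two gradings. The obstruction (\ref{eq-c}) is a tensor-degree-$2$ element of $T(M)$, and ``pushing the ambiguity into $\iota(M^3)\cdot M$'' does not make it vanish: a product $z\cdot y$ in $U(M)$ with $z\in M^3$, $y\in M$ is an associative word of length $2$, not a mock-Lie monomial of length $4$. The relation $zy+yz=2\,z\circ y=0$ only says such words anticommute; it does not kill them. So ``total filtration degree $\ge 4$, hence zero'' is precisely the claim to be established, not a tool you can invoke. Moreover, even if every first-generation obstruction reduced to tensor-degree $\ge 2$, you would still have to check that compositions of these \emph{new} relations with the old ones never produce a nonzero degree-$1$ element; that second round is where a putative element of $\Ker\iota$ would appear, and you do not address it. Your fallback constructions (a split-null-type algebra, the adjoint representation patched on the center) are not carried far enough to check: note that when $M^4=0$ the whole of $M^3$ is central, so ``patching the annihilator'' is exactly the hard part, not a routine correction.

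For comparison, the paper never passes to a quotient or invokes speciality of an auxiliary algebra. It writes $z\in\Ker\iota$ explicitly as a member of the two-sided ideal of $T(L)$ generated by (\ref{eq-id}), separates the resulting equality by tensor degree, and then applies well-chosen contraction maps $L^{\otimes n}\to L$ to the degree-$2$ and degree-$3$ pieces: $a\otimes b\mapsto a\circ b$ yields $z\in L^3$, and $a\otimes b\otimes c\mapsto (a\circ c)\circ b$ (this particular bracketing, together with the Jacobi identity, is what makes the computation close) yields $z\in L^4$. It is a three-step direct calculation. Your route \emph{can} be completed---the target statement is true, and indeed follows from Slin'ko's theorem on nilpotency index $\le 5$ cited in the paper---but as written it defers the entire content of the theorem to a claim you have only gestured at.
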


\begin{proof}
Suppose that $z \in \Ker\iota$, i.e., $z$ belongs to the ideal of $T(L)$ 
generated by elements of the form (\ref{eq-id}). We may write
\begin{equation}\label{eq-eq}
z = 
\sum_{i\ge 0} \sum_{j\ge 0} \sum_{k\in \mathbb I_{ij}} 
a_i^{(k)} \otimes \Big(
\frac12(x_{ij}^{(k)} \otimes y_{ij}^{(k)} + y_{ij}^{(k)} \otimes x_{ij}^{(k)}) - x_{ij}^{(k)} \circ y_{ij}^{(k)}
\Big)
\otimes b_j^{(k)} ,
\end{equation}
where $a_i^{(k)}, b_i^{(k)}$ are homogeneous elements of $T(L)$ of degree $i$, 
and $x_{ij}^{(k)}, y_{ij}^{(k)} \in L$ (we identify elements 
$\lambda \otimes x$, $x \otimes \lambda$, and $\lambda x$ for 
$\lambda \in T^0(L) = K$ and $x\in T(L)$). By modifying $x$'s and $y$'s
appropriately, we may normalize the degree $0$ elements $a_0^{(k)}$ and 
$b_0^{(k)}$ by equating them to $1$. Moreover, by expanding the terms in 
further sums, if necessary, we may assume that $a$'s and $b$'s are monomials,
i.e., $a_i^{(k)} = a_i^{(k1)} \otimes \dots \otimes a_i^{(ki)}$, where
$a_i^{(k*)} \in L$, and similarly for $b$'s.

Isolating in (\ref{eq-eq}) the homogeneous components, we get
\begin{equation}\label{eq-z}
z = - \sum_{k\in \mathbb I_{00}} x_{00}^{(k)} \circ y_{00}^{(k)}
\end{equation}
in degree $1$ (i.e., for elements lying in $L$), and
\begin{equation}\label{eq-n}
\sum_{i+j=n-2} \> \sum_{k\in \mathbb I_{ij}} 
a_i^{(k)} \otimes
\frac12(x_{ij}^{(k)} \otimes y_{ij}^{(k)} + y_{ij}^{(k)} \otimes x_{ij}^{(k)}) 
\otimes b_j^{(k)} 
=
\sum_{i+j=n-1} \> \sum_{k\in \mathbb I_{ij}} 
a_i^{(k)} \otimes (x_{ij}^{(k)} \circ y_{ij}^{(k)}) \otimes b_j^{(k)}
\end{equation}
in degree $n>1$ (i.e., for elements lying in $L^{\otimes n}$). The equality 
(\ref{eq-z}) implies $z \in L^2$. 

Applying to both sides of the equality (\ref{eq-n}) for $n=2$ the multiplication
map $\circ: L \otimes L \to L$, and using commutativity of $L$, we get
\begin{equation}\label{eq-3}
\sum_{k\in \mathbb I_{00}} x_{00}^{(k)} \circ y_{00}^{(k)} 
=
  \sum_{k\in \mathbb I_{10}} (x_{10}^{(k)} \circ y_{10}^{(k)}) \circ a_1^{(k)}
+ \sum_{k\in \mathbb I_{01}} (x_{01}^{(k)} \circ y_{01}^{(k)}) \circ b_1^{(k)} ,
\end{equation}
what, together with (\ref{eq-z}), implies $z \in L^3$.

Applying to both sides of the equality (\ref{eq-n}) for $n=3$ the map 
$L \otimes L \otimes L \to L$, 
$a \otimes b \otimes c \mapsto (a \circ c) \circ b$, and taking into account 
commutativity and the Jacobi identity, we get
\begin{gather*}
\sum_{k\in \mathbb I_{10}} (x_{10}^{(k)} \circ y_{10}^{(k)}) \circ a_1^{(k)}
+ 
\sum_{k\in \mathbb I_{01}} (x_{01}^{(k)} \circ y_{01}^{(k)}) \circ b_1^{(k)} 
= \\
- 2\Big(
  \sum_{k\in \mathbb I_{02}} 
((x_{02}^{(k)} \circ y_{02}^{(k)}) \circ b_2^{(k2)}) \circ b_2^{(k1)}
+ \sum_{k\in \mathbb I_{11}} 
(x_{11}^{(k)} \circ y_{11}^{(k)}) \circ (a_1^{(k)} \circ b_1^{(k)})
+ \sum_{k\in \mathbb I_{20}} 
((x_{20}^{(k)} \circ y_{20}^{(k)}) \circ a_2^{(k1)}) \circ a_2^{(k2)}
\Big) ,
\end{gather*}
what, together with (\ref{eq-z}) and (\ref{eq-3}), implies $z \in L^4$.
\end{proof}

One may try to continue the same way, considering the equality (\ref{eq-n}) in 
higher degrees, and applying maps on the tensor powers of $L$ with different 
bracketings and permutations. For example, in the case $n=4$, applying to both sides of (\ref{eq-n}) the map 
$L \otimes L \otimes L \otimes L \to L$, 
$a \otimes b \otimes c \otimes d \mapsto ((a \circ b) \circ c) \circ d$, and 
using the Jacobi identity, we get:
\begin{gather*}
\sum_{k\in \mathbb I_{02}} 
((x_{02}^{(k)} \circ y_{02}^{(k)}) \circ b_2^{(k1)}) \circ b_2^{(k2)}
- \frac12 \sum_{k\in \mathbb I_{11}} 
((x_{11}^{(k)} \circ y_{11}^{(k)}) \circ a_1^{(k)}) \circ b_1^{(k)} 
- \frac12 \sum_{k\in \mathbb I_{20}} 
(x_{20}^{(k)} \circ y_{20}^{(k)}) \circ (a_2^{(k1)} \circ a_2^{(k2)})
\\=
\sum_{k\in \mathbb I_{03}} 
(((x_{03}^{(k)} \circ y_{03}^{(k)}) \circ b_3^{(k1)}) \circ b_3^{(k2)}) \circ
b_3^{(k3)}
+
\sum_{k\in \mathbb I_{12}} 
((a_1^{(k)} \circ (x_{12}^{(k)} \circ y_{12}^{(k)})) \circ b_2^{(k1)}) \circ
b_2^{(k2)}
\\+
\sum_{k\in \mathbb I_{21}} 
((a_2^{(k1)} \circ a_2^{(k2)}) \circ (x_{21}^{(k)} \circ y_{21}^{(k)})) \circ 
b_1^{(k)}
+
\sum_{k\in \mathbb I_{30}} 
((a_3^{(k1)} \circ a_3^{(k2)}) \circ a_3^{(k3)}) \circ (x_{30}^{(k)} \circ y_{30}^{(k)}) .
\end{gather*}
But the complexity of such manipulations explodes and it is not clear how
exactly to combine them to proceed. Moreover, since there exist 
finite-dimensional (and hence nilpotent) exceptional mock-Lie algebras 
(\S \ref{sec-nonsp}), it is impossible that $z \in L^n$ for any $n$, so this process should stop somewhere (actually, at 
$n \le 8$, as there is an exceptional mock-Lie algebra whose nilpotency index is
$9$).

\begin{proposition}
Any mock-Lie algebra of dimension $\le 6$ is special.
\end{proposition}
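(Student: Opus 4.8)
The plan is to combine Theorem~\ref{th-l4} with the classification of low-dimensional mock-Lie algebras. By Theorem~\ref{th-l4}, $\Ker\iota \subseteq L^4$ for every mock-Lie algebra $L$; hence every mock-Lie algebra with $L^4 = 0$ -- i.e.\ nilpotent of index $\le 4$ -- has $\Ker\iota = 0$ and is special. So only the mock-Lie algebras of dimension $\le 6$ with $L^4 \ne 0$ need separate attention.

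I would first pin these down. An associative mock-Lie algebra has index $\le 3$: linearizing $x^{\circ 3} = 0$ gives the Jacobi identity $(x\circ y)\circ z + (y\circ z)\circ x + (z\circ x)\circ y = 0$, whose three summands coincide under commutativity and associativity, forcing $(x\circ y)\circ z = 0$. So $L^4 \ne 0$ makes $L$ nonassociative, hence of index $\ge 4$. On the other hand index $\ge 6$ is impossible in dimension $\le 6$: it would force $\dim L/L^2 \le 2$, i.e.\ $L$ to be generated by at most two elements, contradicting the bound of \cite{gutierrez}. Thus $L^4 \ne 0$ is equivalent to nilpotency index exactly $5$, which forces $\dim L \ge 5$. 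Since a direct sum of special algebras is special (if $L_i \hookrightarrow B_i^{(+)}$ with $B_i$ associative, then $L_1 \oplus L_2 \hookrightarrow (B_1 \times B_2)^{(+)}$), it is enough to treat the \emph{indecomposable} mock-Lie algebras of dimension $5$ or $6$ and index $5$. Consulting the classification of nonabelian (in particular, of nonassociative) mock-Lie algebras of dimension $\le 6$ -- \cite{burde-fial} and the table above -- this leaves a very short list, contained in $\{\mathfrak L,\, A_{16},\, A_{26}(\beta,\gamma)\}$ ($\mathfrak L$ being the unique $5$-dimensional nonassociative mock-Lie algebra, $\beta,\gamma \in \{0,1\}$); and any of these that in fact has index $4$ is anyway already covered by Theorem~\ref{th-l4}.

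For each algebra $L$ that really remains, I would exhibit an explicit faithful representation $\rho\colon L \to \End(V)$ on a small space $V$ satisfying (\ref{eq-repr}): this amounts to writing down a few matrices and a finite check, both of the defining identity and of injectivity of $\rho$. Equivalently -- and more in keeping with the rest of this section -- one can read off $\Ker\iota = 0$ directly from the GBNP presentation of $U(L)$ underlying the table above, by checking that the normal forms of the images in $U(L)$ of a basis of $L$ are linearly independent.

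The main obstacle is the absence of a single conceptual argument for all of dimension $\le 6$: Theorem~\ref{th-l4} by itself stops at nilpotency index $4$, and -- unlike in the Lie case -- speciality is not inherited by quotients, so one cannot simply invoke the Shirshov--Cohn theorem for the $2$-generated algebras. The proof is therefore an honest finite case analysis, and the genuinely delicate points are getting the classification in dimensions $5$ and $6$ right -- some $6$-dimensional algebras come in parametrized families -- and, for each algebra of index $5$, constructing a faithful representation (or verifying the enveloping-algebra computation) carefully enough to be certain the embedding is genuine.
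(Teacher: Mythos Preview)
Your plan is correct and overlaps with the paper's own arguments, but it is more elaborate than necessary. The paper gives three independent proofs. Its second proof observes that for every nonassociative mock-Lie algebra of dimension $\le 6$ one actually has $L^4 = 0$ (this can be read off the multiplication tables in \cite{burde-fial}), so Theorem~\ref{th-l4} finishes the job at once and your index-$5$ case is vacuous. Your GBNP alternative is precisely the paper's first proof: compute a Gr\"obner basis of $U(L)$ for each of $\mathfrak L$, $A_{16}$, $A_{26}(\beta,\delta)$ and check that it contains no degree-$1$ elements.

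There is also a third route you miss entirely: Slin'ko's theorem \cite[Theorem~2]{slinko} says that every nilpotent Jordan algebra of nilpotency index $\le 5$ is special. Since you have already argued (via the dimension count and the bound from \cite{gutierrez}) that any mock-Lie algebra of dimension $\le 6$ has index $\le 5$, quoting Slin'ko would finish your proof immediately, with no case analysis and no computer check. So your intermediate reduction to index $\le 5$ is not wasted effort---it just needed the right citation to conclude.
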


\begin{proof}[First proof]
According to \cite[Propositions 4.1 and 5.1]{burde-fial}, any nonassociative 
mock-Lie algebra $L$ of dimension $\le 6$ is isomorphic to one of the following
algebras: $\mathfrak L$ (of dimension 5), $A_{16}$, or $A_{26}(\beta,\delta)$ 
(of dimension 6). Using the same GAP/GBNP computations, we may check that 
Gr\"obner basis of the universal enveloping algebra of these algebras does not contain 
elements of degree $1$ (i.e., elements of $L$), hence the defining ideal of 
$U(L)$ does not contain such elements, and $L$ embeds into $U(L)$.
\end{proof}

\begin{proof}[Second proof]
For any nonassociative mock-Lie algebra $L$ of dimension $\le 6$, we have
$L^4 = 0$, hence according to Theorem \ref{th-l4}, $L$ embeds into $U(L)$.
\end{proof}

\begin{proof}[Third proof]
Follows from the result of Slin'ko \cite[Theorem 2]{slinko} that a nilpotent 
Jordan algebra of nilpotency index $\le 5$ is special.
\end{proof}

\section{No alternative route to Ado}\label{sec-lieado}

In \cite{ado}, an alternative proof of the Ado theorem for nilpotent Lie 
algebras was given, not utilizing universal enveloping algebras, but working 
entirely inside the category of finite-dimensional Lie algebras. Our initial 
attempt was to modify this proof for the mock-Lie case. This approach, however,
meets several obstacles, some of them seems difficult to surmount (and was 
doomed to failure anyway due to existence of exceptional mock-Lie algebras, see the next 
\S \ref{sec-nonsp}). We think it is instructive to examine these 
obstacles, in order to better understand the peculiarities of the mock-Lie case.

The proof in \cite{ado} goes as follows. First it is noted that an 
$\mathbb N_{< n}$-graded (i.e., $\mathbb N$-graded with nonzero components
concentrated in degrees $1,2,\dots,n-1$) Lie algebra $L$ can be embedded into 
its tensor product extension $L \otimes tK[t]/(t^n)$. This is carried over 
verbatim to the mock-Lie case. 

The associative commutative algebra $tK[t]/(t^n)$ possess a nondegenerate
derivation $t \frac{d}{dt}$. This allows to construct a faithful representation
of the Lie algebra $L \otimes tK[t]/(t^n)$ (and hence of its subalgebra $L$) of
the form 
$\big(L \otimes tK[t]/(t^n)\big) \ltimes 
\big(\id_L \otimes t \frac{d}{dt}\big)$. 
Completely similar with \cite[Lemma 1.3]{ado}, we have an elementary

\begin{lemma}\label{lemma-nondeg}
Let $L$ be a mock-Lie algebra, $V$ an $L$-module, and $D$ an antiderivation of 
$L$ with values in $V$ such that $\Ker D = 0$. Then $L$ has a faithful 
representation.
\end{lemma}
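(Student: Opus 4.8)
The plan is to carry over to the mock-Lie setting the elementary construction of \cite[Lemma 1.3]{ado}, using the extension module $V \dotplus KD$ recalled just before the statement. First I would make that module explicit: define a linear map $\sigma : L \to \End(V \dotplus KD)$ by $\sigma(x)|_V = \rho(x)$ and $\sigma(x)(D) = D(x) \in V$, so that the action sends the extra basis vector $D$ into $V$. I would then check that $\sigma$ satisfies the representation identity (\ref{eq-repr}). On a vector $v \in V$ that identity is just (\ref{eq-repr}) for $\rho$ itself; on the basis vector $D$ it unwinds, using $\sigma(y)(D) = D(y) \in V$ and $\sigma|_V = \rho$, to the equality $D(x \circ y) = -\rho(x)D(y) - \rho(y)D(x)$, which is precisely the hypothesis that $D$ is an antiderivation of $L$ with values in $V$. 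This is the ``if'' part of the equivalence stated before the lemma, so $V \dotplus KD$ is indeed an $L$-module with representation $\sigma$.

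Next I would verify faithfulness of $\sigma$. Suppose $\sigma(x) = 0$ for some $x \in L$. Then in particular $D(x) = \sigma(x)(D) = 0$, so $x \in \Ker D$; since $\Ker D = 0$ by hypothesis, $x = 0$. Hence $\sigma$ is a faithful representation of $L$, which is what we wanted, and (as noted earlier) the homomorphism $-\tfrac12\sigma$ then realizes $L$ inside an associative algebra.

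I do not expect a genuine obstacle here — the lemma is advertised as elementary, and rightly so. The only point that needs care is the bookkeeping around the extra basis vector: it must be sent into $V$ and not along $KD$, so that the restriction of $\sigma(x)$ to the line $KD$ already ``sees'' $x$ through $D(x)$, and injectivity of $D$ is then exactly what forces faithfulness. The sole change from the Lie-algebraic argument of \cite{ado} is the replacement of derivations by antiderivations, which, as observed in the paragraph preceding the lemma, is dictated by the sign in (\ref{eq-repr}).
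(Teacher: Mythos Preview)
Your proof is correct and follows essentially the same idea as the paper's: extend $V$ by a direct summand on which $x$ acts by evaluation $d \mapsto d(x)$, and then injectivity of $D$ forces faithfulness. The only cosmetic difference is that the paper extends by the whole space $\Der_{-1}(L,V)$ rather than the single line $KD$; your smaller module already suffices, and indeed the paper itself notes just before the lemma that $V \dotplus KD$ is an $L$-module precisely when $D$ is an antiderivation.
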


\begin{proof}
The required representation $\rho$ is given by the action of $L$ on 
$V \dotplus \Der_{-1}(L,V)$, defined naturally on the first direct summand, and
via $\rho(x)(d) = d(x)$ for $x\in L$ and $d \in \Der_{-1}(L,V)$, on the second direct summand.
\end{proof}

Now comes the first obstacle: unlike in the case of derivations, the algebra 
$t K[t]/(t^n)$ possess nondegenerate antiderivations if and only if $n\le 4$. 
More precisely, we have:

\begin{lemma}\label{lemma-antider}
The space of antiderivations of the algebra $tK[t]/(t^n)$ ($n\ge 2$) is 
$1$-dimensional for $n=2$, $2$-dimensional for $n=3$, and $3$-dimensional for
$n\ge 4$. Its basis can be chosen among antiderivations of the following
form (only nonzero actions on the standard basis $\{t,t^2,\dots,t^{n-1}\}$ of 
$tK[t]/(t^n)$ are given):
\begin{enumerate}[\upshape(i)]
\item $t \mapsto t^{n-1}$;
\item 
$t \mapsto -\frac 12 t^{n-2}, \quad t^2 \mapsto t^{n-1} \>$ (if $n \ge 3$);
\item 
$t \mapsto t^{n-3}, \quad 
 t^2 \mapsto -2t^{n-2}, \quad
 t^3 \mapsto t^{n-1} \>$ (if $n \ge 4$).
\end{enumerate}
\end{lemma}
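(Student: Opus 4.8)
The plan is to work directly with the defining equation of an antiderivation and exploit the monomial basis $\{t,t^2,\dots,t^{n-1}\}$ of $A := tK[t]/(t^n)$. Write $D(t) = \sum_{k=1}^{n-1} c_k t^k$ for unknown scalars $c_k$. Since $A$ is generated by $t$, the antiderivation $D$ is determined by the $c_k$'s together with the recursion forced by the antiderivation rule $D(x \circ y) = -D(x)\circ y - x \circ D(y)$; in particular $D(t^{m+1}) = D(t \cdot t^m) = -D(t)t^m - t\,D(t^m)$, so by induction $D(t^m) = -m\, t^{m-1} D(t) = -m \sum_{k} c_k t^{m+k-1}$, interpreted modulo $t^n$. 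Thus every antiderivation is obtained by choosing $D(t)$ freely and then propagating; the only constraints come from consistency, i.e.\ from checking that the rule holds on all pairs $(t^a,t^b)$, not merely on the generator. First I would verify that the formula $D(t^m) = -m\,t^{m-1}D(t)$ is in fact consistent for all $a,b$: computing $D(t^{a+b})$ two ways gives $-(a+b)t^{a+b-1}D(t)$ versus $-D(t^a)t^b - t^a D(t^b) = a t^{a-1}t^b D(t) + b t^a t^{b-1} D(t) = (a+b)t^{a+b-1}D(t)$ — wait, signs: $-D(t^a)t^b - t^a D(t^b) = a t^{a-1+b} D(t) + b t^{a+b-1} D(t)$, which is $(a+b)t^{a+b-1}D(t)$, whereas the direct value is $-(a+b)t^{a+b-1}D(t)$. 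So consistency forces $2(a+b)t^{a+b-1}D(t) = 0$ in $A$ for all $a,b\ge 1$, i.e.\ $t^{a+b-1}D(t) = 0$ whenever $a+b-1 \le n-1$, that is $t^j D(t) = 0$ for all $1 \le j \le n-2$. (Here I use $\mathrm{char}\,K \ne 2$.)

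The second step is to translate $t^j D(t) = 0$ for $1 \le j \le n-2$ into conditions on the $c_k$. Since $t^j t^k = t^{j+k}$ vanishes exactly when $j+k \ge n$, the condition $t\,D(t)=0$ already kills all $c_k$ with $k \le n-2$, leaving only $c_{n-1}$ potentially nonzero — but then the higher conditions $t^j D(t)=0$ for $j\ge 2$ are automatic, and $c_{n-1}$ is unconstrained. That would give a $1$-dimensional space for every $n$, contradicting the claim, so I have evidently been too hasty: the consistency computation above must be redone keeping track of the fact that $D$ need not send $t^2$ to $-2tD(t)$ if one does not assume it. The correct setup is to treat $D$ as an arbitrary linear map and impose the antiderivation identity on each pair $(t^a,t^b)$ with $a+b \le n-1$ as independent linear equations in the $n-1$ unknowns $d_{a}^{(k)}$ (coefficients of $D(t^a)$). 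The right approach is therefore: set $D(t^a) = \sum_k d_a^{(k)} t^k$, write down the linear system coming from $D(t^{a+b}) + D(t^a)t^b + t^a D(t^b) = 0$ for all relevant $a,b$, and solve it. The claimed basis elements (i), (ii), (iii) should then fall out as the general solution, and one checks by inspection that (i) works for all $n\ge 2$, (ii) additionally for $n\ge 3$, (iii) additionally for $n\ge 4$, and that no fourth independent solution exists because any antiderivation is determined by $D(t),D(t^2),D(t^3)$ (as $t^4,\dots$ are products of these with lower powers and the rule then fixes their images, with the images of $t,t^2,t^3$ themselves constrained to lie in the top-degree parts).

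The main obstacle I anticipate is bookkeeping: organizing the linear system so that it is transparent why the solution space has dimension exactly $\min(n-1,3)$, rather than getting lost in indices. The cleanest way is probably to observe that an antiderivation is forced to ``raise degree to the top'': from $D(t\circ t^{n-2}) = 0$ (since $t^{n-1}\circ t = 0$ as $n+1-2 = n-1$... actually $t^{n-1}t = t^n = 0$ only when... one must be careful) one deduces that $D(t^a)$ can have nonzero components only in degrees $\ge n-a$, so $D(t)$ lives in degrees $n-1, n-2, n-3$ at most, giving at most three parameters; then one shows these three choices are independent and each extends consistently. I would present the verification that (i)--(iii) are antiderivations as a direct substitution (routine), and spend the real effort on the upper bound $\dim \le 3$ and on pinning down exactly when each of the three survives, which is where the dependence on $n$ (the cases $n=2$, $n=3$, $n \ge 4$) enters.
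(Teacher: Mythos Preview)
Your approach---determine $D$ by its value on $t$ via the recursion and then impose consistency on the remaining pairs---is exactly what the paper has in mind (it says only ``straightforward computation using induction on the degree of monomials''). The trouble is in the execution, and your diagnosis of where it goes wrong is off.

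The induction $D(t^{m+1}) = -t^mD(t) - tD(t^m)$ does \emph{not} yield $D(t^m)=-m\,t^{m-1}D(t)$. Writing $D(t^m)=a_m\,t^{m-1}D(t)$, the recursion gives $a_{m+1}=-(1+a_m)$, so starting from $a_1=1$ the sequence is $1,-2,1,-2,\dots$, not $-m$. With the correct values the first genuine consistency check is between the two expressions for $D(t^4)$: via $(1,3)$ one gets $-2t^3D(t)$, via $(2,2)$ one gets $4t^3D(t)$, hence $6t^3D(t)=0$, i.e.\ $t^3D(t)=0$ (using $\mathrm{char}\,K\ne 2,3$). All other pairs $(a,b)$ are then automatically consistent, and the constraints coming from $a+b\ge n$ are vacuous once $t^3D(t)=0$ holds. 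Thus $D(t)$ ranges freely over $\mathrm{span}\{t^{n-3},t^{n-2},t^{n-1}\}\cap A$, giving dimension $\min(n-1,3)$, and the basis (i)--(iii) drops out.

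Your backtracking step---``$D$ need not send $t^2$ to $-2tD(t)$''---is the wrong fix: the pair $(t,t)$ forces $D(t^2)=-2tD(t)$ unconditionally. The error was purely arithmetic, in the closed form for $a_m$. Once that is corrected, the argument you outlined goes through cleanly and there is no need for the general linear-system bookkeeping you fall back on.
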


\begin{proof}
Straightforward computation using induction on the degree of monomials.
\end{proof}

Thus we have only a very limited analog of \cite[Lemma 2.5]{ado}, with
$\mathbb N_{<4}$-gradings instead of arbitrary $\mathbb N$-gradings:

\begin{lemma}\label{lemma-grad}
An $\mathbb N_{<4}$-graded mock-Lie algebra has a faithful representation.
\end{lemma}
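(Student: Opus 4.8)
The plan is to imitate, in the mock-Lie setting, the trick recalled just before Lemma~\ref{lemma-nondeg}: embed an $\mathbb N_{<4}$-graded mock-Lie algebra $L$ into a tensor product extension that carries a nondegenerate antiderivation with values in a module, and then invoke Lemma~\ref{lemma-nondeg}. First I would observe that since $L = L_1 \dotplus L_2 \dotplus L_3$ is concentrated in degrees $1,2,3$, it is killed by the ideal $(t^4)$ in the sense that $L \otimes tK[t]/(t^4)$ contains $L$ as a subalgebra via $x_i \mapsto x_i \otimes t^i$ for $x_i \in L_i$: a product of a degree-$i$ and a degree-$j$ element on the left lands in $L_{i+j}$, which is zero as soon as $i+j\ge 4$, and on the right $t^i\cdot t^j = t^{i+j}$ is likewise zero in $tK[t]/(t^4)$ once $i+j\ge 4$, so the two multiplications are compatible and the map is an injective algebra homomorphism. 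This is the verbatim analog of the first step of \cite{ado}, already flagged in the text as carrying over.

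Next I would put $A = tK[t]/(t^4)$ and use Lemma~\ref{lemma-antider}(iii): for $n=4$ the algebra $A$ has the antiderivation $D_0$ acting by $t \mapsto t$, $t^2 \mapsto -2t^2$, $t^3 \mapsto t^3$, i.e.\ $D_0$ is the diagonal map with eigenvalues $1,-2,1$ on $t,t^2,t^3$. The point is that this $D_0$ is \emph{nondegenerate} (invertible) on $A$. Then $\id_L \otimes D_0$ is a linear map on $L \otimes A$; I would check it is an antiderivation of the mock-Lie algebra $L \otimes A$. Here one uses that $L\otimes A$ is mock-Lie because mock-Lie and (associative commutative, hence a fortiori) algebras of the relevant Koszul-type pair tensor appropriately — more concretely, $L\otimes A$ with $A$ associative commutative is again a Jordan algebra of nil index $3$, as $(x\otimes a)^{\circ 3} = x^{\circ 3}\otimes a^3 = 0$; and for a product $\circ$ of the form $(x\otimes a)\circ(y\otimes b) = (x\circ y)\otimes ab$, applying $\id_L\otimes D_0$ gives $(x\circ y)\otimes D_0(ab) = -(x\circ y)\otimes(D_0(a)b + aD_0(b)) = -(\id\otimes D_0)(x\otimes a)\circ(y\otimes b) - (x\otimes a)\circ(\id\otimes D_0)(y\otimes b)$, using that $D_0$ is an antiderivation of $A$. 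Since $\id_L$ and $D_0$ are both injective, $\id_L\otimes D_0$ has zero kernel on $L\otimes A$.

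Finally, I would regard $L\otimes A$ as a module over itself (the regular representation — note that a mock-Lie algebra acts on itself by $\rho(x) = $ left multiplication, and \eqref{eq-repr} is just the Jacobi identity), so that $\id_L\otimes D_0 \in \Der_{-1}(L\otimes A,\, L\otimes A)$ is an antiderivation with values in this module and with trivial kernel. Lemma~\ref{lemma-nondeg} then yields a faithful representation of $L\otimes A$, and restricting it along the embedding $L \hookrightarrow L\otimes A$ of the first paragraph gives a faithful representation of $L$.

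The main obstacle — and the reason the argument stops at $\mathbb N_{<4}$ rather than running for arbitrary $\mathbb N$-gradings as in \cite{ado} — is precisely Lemma~\ref{lemma-antider}: one needs the truncated polynomial algebra to carry a \emph{nondegenerate} antiderivation, and this fails for $tK[t]/(t^n)$ once $n\ge 5$, since the space of antiderivations stays $3$-dimensional while the algebra grows, and any antiderivation must send $t$ into the span of $t^{n-3},t^{n-2},t^{n-1}$ and hence is far from injective. So there is no room to improve the grading bound by this method — consistent with the remark in the text that the whole \cite{ado}-style route is anyway doomed by the existence of exceptional mock-Lie algebras. A secondary point to be careful about is the verification that $\id_L\otimes D_0$ really is an antiderivation of the \emph{whole} algebra $L\otimes A$ and not merely on decomposable tensors; but this is routine since both $\circ$ on $L\otimes A$ and $\id_L\otimes D_0$ are defined by bilinear/linear extension from decomposables, so the identity, being multilinear, propagates.
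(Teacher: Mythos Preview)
Your proof is correct and follows essentially the same route as the paper's: embed $L$ into $L\otimes tK[t]/(t^4)$ via the grading, transport the nondegenerate antiderivation of Lemma~\ref{lemma-antider}(iii) to $\id_L\otimes D_0$, and apply Lemma~\ref{lemma-nondeg} with the adjoint module to get a faithful representation of $L\otimes A$, hence of $L$. The paper's proof is terser but identical in substance; your additional checks (that $L\otimes A$ is mock-Lie, that $\id_L\otimes D_0$ is an antiderivation, that the adjoint action satisfies \eqref{eq-repr}) are all sound and merely spell out what the paper leaves implicit.
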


\begin{proof}
By above, an $\mathbb N_{<4}$-graded mock-Lie algebra $L$ is embedded into
$L \otimes tK[t]/(t^n)$, $n\le 4$. By Lemma \ref{lemma-antider}, 
$tK[t]/(t^n)$ has a nondegenerate antiderivation $D$, and hence 
$L \otimes tK[t]/(t^n)$ has a nondegenerate antiderivation $\id_L \otimes D$. 
Applying Lemma \ref{lemma-nondeg} to the adjoint module $L \otimes tK[t]/(t^n)$
and the antiderivation $\id_L \otimes D$, we get that $L \otimes tK[t]/(t^n)$ 
has a faithful representation, and hence so does its subalgebra $L$.
\end{proof}

Note that, unlike in the Lie case, the semidirect sum 
$\big(L \otimes tK[t]/(t^n)\big) \ltimes KD$ is not a mock-Lie algebra (for 
example, it is not locally nilpotent). We merely consider it as an
$L \otimes tK[t]/(t^n)$-module, ignoring its algebra structure.

The proof in \cite{ado} then proceeds by induction on $\dim I$ in the 
representation of an arbitrary nilpotent Lie algebra as a quotient $F/I$ of a 
free nilpotent Lie algebra $F$. The key ingredient in this induction is an
auxiliary result about possibility to distinguish elements of a Lie algebra by
the kernel of a suitable representation (\cite[Lemma 2.10]{ado}), which is
proved using some combinatorics related to the tensor product of 
representations. And here comes another obstacle: in general, there is no notion
of the tensor product of two representations of a mock-Lie algebra. Perhaps, one
may try to work around it by defining an ad-hoc bialgebra structure on mock-Lie
algebras in question using central elements, similarly how it is done in 
\cite[Propositions 1 and 2]{zhelyabin} in some particular cases of Jordan 
algebras, and get in this way that any mock-Lie algebra of nilpotency index 
$\le 4$ has a faithful representation, and hence is special. This result, 
however, would be covered by Theorem \ref{th-l4}, or, more generally, by the above-cited Slin'ko's result about speciality of nilpotent Jordan 
algebras of nilpotency index $\le 5$.

\section{Exceptional mock-Lie algebras}\label{sec-nonsp}

In \cite{hjs}, a $44$-dimensional mock-Lie algebra was presented, on which the
Glennie identity $G_8 = 0$ does not vanish (see \cite{mccrimmon} for a nice 
overview of the Glennie and other special Jordan identities), and hence it is 
exceptional as a Jordan algebra. Though this algebra was constructed using 
Albert \cite{albert-prog}, the authors meticulously define the multiplication 
table on the first $3$ pages of their preprint, and prove that it indeed defines
a mock-Lie algebra with non-vanishing Glennie identity on the next 15 pages.
Here we recreate this 20-years-old effort, indicating how one can rigorously 
establish existence of non-special mock-Lie algebras using computer -- in two,
somewhat different, ways. 

In order to verify whether a certain identity holds in a certain variety of 
algebras, Albert constructs a large enough (but finite-dimensional) homomorphic
image of a free algebra in a given variety (see \cite{jacobs} and Albert User's
Guide at \cite{albert-prog}). Thus, in order to verify whether the Glennie 
identity $G_8 = 0$, where $G_8$ is a word of the total degree $8$ in $3$ 
variables -- two of degree $3$ and one in degree $2$, holds in the variety of 
mock-Lie algebras, one constructs a quotient of the free mock-Lie algebra of rank $3$, freely generated by elements, say, $a$, $b$, $c$,
by the ideal linearly spanned by all words containing either at least $4$ $a$'s,
or at least $4$ $b$'s, or at least $3$ $c$'s. This quotient, let us call it 
$\mathfrak M$, has dimension $44$. Initially, Albert worked over the prime 
fields of characteristic $\le 251$, and we have modified it to work over the 
rationals (\cite{albert-mod}). When computing over the rationals, all nonzero 
coefficients in the multiplication table of $\mathfrak M$ belong to the set 
$\{\pm 2, \pm 1, \pm \frac 12\}$. This means that $\mathfrak M$ can be defined 
over any field of characteristic $\ne 2$. 

For the sake of further discussion, let $\{e_1, e_2, e_3, \dots, e_{44}\}$ be 
the standard basis of $\mathfrak M$ as produced by Albert. A quick inspection of
the multiplication table of $\mathfrak M$, supported by simple computer 
calculations, reveals that $\mathfrak M^9 = 0$, and the center of $\mathfrak M$ 
is one-dimensional, linearly spanned by $e_{44}$.

After constructing the multiplication table of a suitable homomorphic image of 
the free algebra, Albert checks whether a given identity $f = 0$ is satisfied in
that image, by computing the linearization of $f$ on all possible combinations 
of the basic elements, and verifying whether the results are identically zero.
Thus, we can check that $G_8 = 0$ is not an identity in the mock-Lie variety
for any fixed characteristic (well, technically for any characteristic 
$< 2^{63}$, what is the current limitation of \cite{albert-mod}) including zero, but, of course, we want to establish this for \emph{any} characteristic 
$\ne 2,3$. When the fact that some set of identities $f_1 = 0, \dots, f_n = 0$ 
does not imply another identity $f = 0$ in characteristic zero, implies the same
fact in (almost) any positive characteristic $p$? Using the standard 
ultraproduct argument, it is true for all $p > p_0$, for some $p_0$ depending on
$f_1, \dots, f_n, f$. However, this argument does not give any concrete value of
$p_0$. One can try to employ a slightly different method -- instead of 
verifying identity in the given variety, one can add the identity in question to
the defining identities, and to compare the dimension sequences of the 
corresponding pieces of free algebras produced by Albert. With such approach, 
using a simple variant of the Chinese remainder theorem, one can give a concrete
estimate on $p_0$ (see \cite{alternative}), however this estimate is too big for
all practical purposes. 

Instead, we can perform the last Albert's step -- verifying of (non-)identity --
in another, general purpose computer algebra system of our choice (we prefer GAP
\cite{gap} for such sort of tasks), which gives us finer control over the whole
process. Namely, we can verify that in the algebra $\mathfrak M$ over the 
rationals, $G_8(e_1, e_2, e_3) = 96e_{44}$. Obviously, this computation implies 
$G_8(e_1, e_2, e_3) \ne 0$ in $\mathfrak M$ modulo any prime $p \ne 2,3$.

A second way to verify that a given mock-Lie algebra $L$ is exceptional, is to 
compute a Gr\"obner basis of its universal enveloping algebra $U(L)$, and to 
verify whether this Gr\"obner basis contains elements of the first degree (i.e.,
elements of $L$). Indeed, due to universal property of $U(L)$, $L$ is special if
and only if it admits embedding into $U(L)$, what, in its turn, happens if and 
only if the defining ideal of $U(L)$, and hence any of the Gr\"obner bases of 
$U(L)$, does not contain elements of the first degree. Computing with the help 
of GAP and GBNP, as in \S \ref{sec-uj}, we see that the central element $e_{44}$
belongs to a Gr\"obner basis of $U(\mathfrak M)$. (By the way, 
$\dim U(\mathfrak M) = 157$).

The same procedures can be repeated for other known special Jordan identities,
but as all of them are of degree $>8$, the resulting algebras constructed by 
Albert are of higher dimensions. Among those special identities summarized in 
\cite{mccrimmon}, the identities $G_9$ and $S_9$ give rise to exceptional 
mock-Lie algebras of dimension $52$ and $177$ respectively, and the rest of 
identities are, within the mock-Lie variety, consequences of $G_8$, so they do 
not produce a new algebra. The Medvedev special identity (\cite[\S 2]{medvedev})
gives rise to an exceptional mock-Lie algebra of dimension $144$.

\begin{question}\label{quest-dim}
What is the minimal possible dimension of an exceptional mock-Lie algebra?
\end{question}

According to Proposition in \S \ref{sec-uj}, this minimal possible dimension 
lies between $7$ and $44$. 

The algebra $\mathfrak M$ has another interesting feature: since the quotient of
any mock-Lie algebra by the center is special, $\mathfrak M$ is an 
one-dimensional central extension of the $43$-dimensional special algebra. The 
analogous Lie-algebraic situation (where, alas, speciality is understood as an 
embedding into associative \emph{PI} algebras) was discussed many times in the 
literature, with the decisive example of a special Lie algebra whose 
one-dimensional central extension is not special, albeit, naturally, of a very 
different sort (\cite{billig-hom} and references therein).

Moreover, experimenting with GAP, we have found that all random subalgebras and
random quotients of $\mathfrak M$ we have tried, are special, what inclines us 
to think that the minimal dimension in Question \ref{quest-dim} is indeed $44$.

\begin{question}
What is the minimal nilpotency index of an exceptional mock-Lie algebra? 
\end{question}

According to the above-cited Slin'ko's theorem, for Jordan algebras this minimal
nilpotency index is $6$, so for mock-Lie algebras it lies between $6$ and 
$9$. Of course, the Glennie identity, being of degree $8$, is satisfied in all 
mock-Lie algebras of nilpotency index $8$, what, however, still does not 
guarantee that all of them are special.

\section*{Acknowledgements}

Thanks are due to Yuly Billig, Vladimir Dotsenko, Viktor Zhelyabin, and 
especially Ivan Shestakov for useful remarks. Ivan Shestakov has put me on the 
right track by pointing out that almost all statements and conjectures in the 
first draft of the paper were wrong, giving idea of the proof of 
Theorem \ref{th-pbw}, and informing about relevant unpublished preprints, among them \cite{hjs} and \cite{sverchkov-prep}.
This work was supported by 
the Statutory City of Ostrava (grant 0924/2016/Sa\v{S}), and
the Ministry of Education and Science of the Republic of Kazakhstan 
(grant 0828/GF4). 
Some computations were performed using facilities of the National 
Supercomputing Center at the Technical University of Ostrava, supported by 
the Ministry of Education, Youth and Sports of Czech Republic (grant LM2015070).

\renewcommand{\refname}{Software}

\end{document}